 \newtheorem{thm}{Theorem}[section]
 \newtheorem{lem}[thm]{Lemma}
 \newtheorem{prop}[thm]{Proposition}
 \theoremstyle{definition}
 \newtheorem{defn}[thm]{Definition}
 \theoremstyle{remark}
 \newtheorem{rem}[thm]{Remark}
 \newtheorem{ex}{Example}[section]
 \numberwithin{equation}{section}
\begin{document}

%
%
%
%
%
%
%
%
%

\title[Geometric properties of a novel type of orthogonality \ldots]
 {Geometric properties of a novel type of orthogonality via norm derivatives}

\author[Pal ]{ Kallal Pal}

\address{School of Mathematics,\\ Thapar Institute of Engineering and Technology,\\ Patiala-147004,  Punjab, India}

\email{kpal\_phd22@thapar.edu}

\thanks{The authors are grateful to the National Board of Higher Mathematics, Department of Atomic Energy, India (the research funding 02011/11/2020/NBHM
(PR)/R\&D-II/7830) to support this research work.}
\author[Chandok ]{ Sumit Chandok}
\address{School of Mathematics,\\ Thapar Institute of Engineering and Technology,\\ Patiala-147004,  Punjab, India}
\email{sumit.chandok@thapar.edu}
\subjclass{Primary 46B20; Secondary 46C05}

\keywords{Norm derivatives, Brikhoff-James Orthogonality, Smooth normed linear spaces.}

\date{ 2022}

\begin{abstract}

In this article, we generalize the notion of orthogonality as a linear combination of norm derivatives in order to give a novel concept that we refer to as $\rho_{\alpha,\beta}$-orthogonality.  Also, we discuss some of its geometric properties in a real normed linear space and present some sufficient criteria for the smoothness of a normed space by using $\rho_{\alpha,\beta}$-orthogonality. We provide a few examples to show that the $\rho_{\alpha,\beta}$- orthogonality cannot be compared to other well-known orthogonalities in any way. In addition to this, we offer a characterization of inner product spaces by making use of the functional notation $\rho_{\alpha,\beta}$. In addition, we show that any $\rho_{\alpha,\beta}$-orthogonality that preserves linear mapping between two normed linear spaces must necessarily be a scalar multiple of an isometry. Also, using the $\rho_{\alpha,\beta}$-functional, we define the idea of an angle between two vectors and talk about their characteristics in normed spaces.

\end{abstract}

\maketitle
\section{Introduction}

Orthogonality in normed spaces provides a powerful framework for understanding the geometry and structure of vector spaces equipped with norms, enabling the generalization of geometric concepts from Euclidean spaces to more abstract settings. While the Euclidean inner product corresponds to the dot product of vectors, orthogonality in normed spaces relies on the inner product and the norms of the vectors.
Norm derivatives are mentioned as tools for characterizing the geometric features of normed linear spaces. These derivatives arise naturally from the Gateaux derivative of the norm and are used to study concepts like strict convexity and smoothness.

The geometric structure of normed linear spaces can be developed using orthogonality relations derived from the norm derivatives.
The notion of orthogonality relation in a general normed linear space was introduced by Robert \cite{Robert1934orthogonalities}. Later in 1935, Birkhoff introduced Birkhoff orthogonality  \cite{birkhoff1935orthogonality}, which is one of the most important orthogonalities defined in normed linear space. James \cite{james1947orthogonality} provided a comprehensive study of the properties of Birkhoff orthogonality. Due to this, Birkhoff orthogonality has also been referred to Brikhoff-James orthogonality. Later, James \cite{10.1215/S0012-7094-45-01223-3} introduced the Isosceles orthogonality. Furthermore, James \cite{10.1215/S0012-7094-45-01223-3} introduced the Pythagorean orthogonality in normed linear space which generalizes the result in an Euclidean space stating that two vectors are perpendicular if and only if there is a right triangle having two vectors as legs. 


Amir (see \cite{amir1986characterizations}) introduced the functional using norm derivative, which has applications in studying the geometry of normed spaces. Many researchers have used the norm derivatives to characterize the fundamental geometric features of normed linear spaces, such as strict convexity and smoothness. The concepts of the norm derivatives arise naturally from the two-sided limiting nature of the Gateaux derivative of the norm, and therefore, they are proper generalizations of the latter. Later, Mili\'ci\'c \cite{milicic1987g} introduced a new mapping and its orthogonality as a combination of norm derivatives. Recently, Zamani and Moslehian \cite{zamani2019extension} introduced a new type of orthogonality in the context of normed spaces as a convex combination of norm derivatives. 

Also, orthogonality and angle relations are fundamental concepts in normed vector spaces, and their development is deeply rooted in the historical evolution of geometry and mathematics from Euclidean geometry to modern functional analysis.
Angles, angle functions, and the question of how to measure angles are all well-established ideas in mathematics with respect to Euclidean space; there are also several extensions to various non-Euclidean spaces. In particular, it is fascinating to explore the geometric features of several alternative notions of angle functions and angle measures in finite-dimensional real Banach spaces. For instance, several investigations in this field carried out in [see \cite{balestro2017angles},\cite{milicic2007b}], the authors established some sorts of orthogonality and some angle concept in normed spaces. In a normed space $X$, they also introduce the concept of an angle $P-,I-,$ and $g-$ angle between two vectors.

Inspired by these works, the purpose of this article is to introduce $\rho_{\alpha,\beta}$-orthogonality by generalizing some norm derivative type orthogonality as a linear combination. Also, we investigate its interesting properties using the norm derivatives from the perspective of two important geometric concepts, namely, orthogonality and smoothness. As a consequence, we obtain a relation between $\rho_{\alpha,\beta}$-orthogonality and Birkhoff-James orthogonality. We present some illustrative cases that demonstrate the incomparability of the $\rho_{\alpha,\beta}$-orthogonality with other well-known orthogonalities. Moreover, we demonstrate that every ${\rho_{\alpha,\beta}}$-orthogonality preserving linear mapping is necessarily a scalar multiple of an isometry. Also, we define the notion of the angle between two vectors using ${\rho_{\alpha,\beta}}$-functional and discuss their properties in normed space.

\section{Preliminaries}
This section defines some of the notations and terminologies that will be used throughout the article. 

Assume that $(X,\|.\|)$ is a real normed linear space of dimension greater than or equal to $2$. A vector $u \in X$ is said to be orthogonal to a vector $y\in X$ in the sense of Birkhoff–James (\cite{birkhoff1935orthogonality,james1947orthogonality}), written as
$u\perp_B v$, if \begin{align*}
    \|u+tv\|
\geq \|u\|,     
\text{~for all~} t \in \mathbb{R}.
\end{align*}
In 1986, Amir \cite{amir1986characterizations} defined the norm derivatives as, for all  $u, v \in X$, 
\[\rho_{\pm}(u,v)=\lim_{t \to 0^{\pm}}\frac{\|u+tv\|^2-\|u\|^2}{2t}=\|u\|\lim_{t \to 0^{\pm}}\frac{\|u+tv\|-\|u\|}{t}.\]

Mili\'ci\'c \cite{milicic1987g} defined the mapping $\rho : X\times X \rightarrow \mathbb{R}$ by\\
$$\rho(u,v)= \frac{\rho_{-}(u,v)+\rho_{+}(u,v)}{2},$$
for all $u,v \in X$ and introduced the corresponding $\rho$-orthogonality as follows: \\
$$u \perp_{\rho} v  
~\text{if and only if }~  
\rho(u,v)= \frac{\rho_{-}(u,v)+\rho_{+}(u,v)}{2}= 0.$$

Following are the well known properties of norm derivatives (see \cite{alsina2010norm}) in a real normed linear space $X$, for all  $u, v \in X$, we have
\begin{enumerate}
    \item[(i)]$
    \rho_{-}(u,v)\leq \rho_{+}(u,v)$;
    \item[(ii)] $ \beta\geq0$, $\rho_{\pm}(\beta u,v)=\beta \rho_{\pm}( u,v)=\rho_{\pm}( u,\beta v)$;
    \item [(iii)]  $ \beta < 0$, $\rho_{\pm}(\beta u,v)=\beta \rho_{\mp}( u,v)=\rho_{\pm}( u,\beta v)$;
    \item[(iv)] $\beta \in \mathbb{R}$,  $\rho_{\pm}(u,\beta u+v)=\beta \|u\|^2+\rho_{\pm}(u,v)$;
    \item[(v)]  $\rho_{\pm}(u,u)=\|u\|^2$;
    \item[(vi)]$|\rho_{\pm}(u,v)|\leq \|u\|\|v\|$;
\item[(vii)] $ u \perp_{\rho_{\pm}} v $;
~\text{if and only if }~ $\rho_{\pm}(u,v)=0.$
    
\end{enumerate}
It is interesting to note that the relations $\perp_{\rho_+}$,$\perp_{\rho_{-}}$ and $\perp_{\rho}$ are equivalent in an inner
product space but are generally incomparable in a normed space which is not smooth.\\
Furthermore, Zamani and Moslehian \cite{zamani2019extension} introduced $\rho_\lambda$-orthogonality  as an extension of orthogonality relations based on norm derivatives : 
$$ u \perp_{\rho_{\lambda}} v 
~\text{if and only if }~  
\rho_{\lambda}(u,v)= \lambda \rho_{-}(u,v)+(1-\lambda)\rho_{+}(u,v)= 0,$$
\text{for each }$u,v \in X$ \text{and} $\lambda \in[0,1]$.
Also, they gave a characterization of inner product spaces based on functional $\rho_\lambda$.\\
Due to  Lumer \cite{lumer1961semi} and  Giles \cite{giles1967classes} in every normed space $(X,\|.\|)$, there exists a mapping $[.,.] : X \times X \rightarrow \mathbb{C}$, known as a semi-inner product (s.i.p.), satisfying the following properties:
\begin{itemize}
 \item[(i)] $[\alpha u+\beta v,w]= \alpha [u,v]+\beta [v,z]$, for all $u,v,z \in u$ and $c,\beta \in \mathbb{C}$,
 \item[(ii)] $[u,\alpha v]=\Bar{\alpha}[u,v]$, for all $u,v \in X$ and $\alpha \in \mathbb{C}$,
 \item[(iii)] $[u,u]=\|u\|^2$, for all $u \in X$,
 \item[(iv)] $|[u,v]|\leq \|u\|\|v\|$, for all $u,v \in X$.
\end{itemize}

In an arbitrary normed space, the idea of orthogonality can be presented in a variety of ways.
 A semi-orthogonality of the components $u$ and $v$ in a semi inner product $[.,.]$ is defined by 
\begin{align*}
u \perp_{s} v \text{~if and only if~} [v,u]=0.
\end{align*}

The following results related to norm derivatives will be used in the sequel.

\begin{lem} \cite{alsina2010norm} \label{a}
 Let $(X,\|.\|)$ be a real normed linear space. Then the following conditions are equivalent:\\
 (i) $X$ is smooth.\\
 (ii) $\rho_{-}(u,v)=\rho_{+}(u,v)$, for all $u,v\in X$. 
\end{lem}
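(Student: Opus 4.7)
The plan is essentially to unwind the definition of smoothness and match it term-by-term with the statement $\rho_{-}(u,v)=\rho_{+}(u,v)$. Recall that $X$ is smooth precisely when the norm is Gateaux differentiable at every nonzero point, i.e.\ for every $u\neq 0$ and every $v\in X$ the limit
\[
D(u,v):=\lim_{t\to 0}\frac{\|u+tv\|-\|u\|}{t}
\]
exists. Since one-sided limits of this difference quotient always exist (the function $t\mapsto \|u+tv\|$ is convex in $t$, hence has left and right derivatives), the existence of the full limit is equivalent to the equality of the two one-sided limits.

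For the direction (i)$\Rightarrow$(ii), I would fix $u\neq 0$ and $v\in X$. Smoothness gives the equality
\[
\lim_{t\to 0^{-}}\frac{\|u+tv\|-\|u\|}{t}=\lim_{t\to 0^{+}}\frac{\|u+tv\|-\|u\|}{t}.
\]
Multiplying both sides by $\|u\|$ and applying the defining formula for $\rho_{\pm}$ from Amir's definition yields $\rho_{-}(u,v)=\rho_{+}(u,v)$. The case $u=0$ is handled immediately by property (ii) of the norm derivatives (taking $\beta=0$), which gives $\rho_{\pm}(0,v)=0$.

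For (ii)$\Rightarrow$(i), I would simply reverse the computation. Given $\rho_{-}(u,v)=\rho_{+}(u,v)$ for all $u,v$, pick any $u\neq 0$ and $v\in X$; dividing by $\|u\|$ (which is nonzero) shows that the two one-sided derivatives of $t\mapsto \|u+tv\|$ at $t=0$ coincide, so the Gateaux derivative of the norm at $u$ in direction $v$ exists. Since $u$ and $v$ were arbitrary, the norm is Gateaux differentiable everywhere off the origin, which is the definition of smoothness.

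The proof is essentially formal, so there is no real combinatorial or analytic obstacle; the only delicate point to state carefully is that the one-sided limits defining $\rho_{\pm}$ always exist (by convexity of $t\mapsto \|u+tv\|$), so the content of the equivalence is entirely in the equality of these two one-sided quantities rather than in their separate existence. I would either cite this convexity fact or mention it in a single sentence before beginning the two implications.
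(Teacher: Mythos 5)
The paper does not prove this lemma at all; it is imported verbatim from the cited monograph of Alsina, Sikorska and Tom\'as, so the only meaningful question is whether your argument is complete on its own. It is complete only under the convention that ``smooth'' \emph{means} Gateaux differentiability of the norm at every nonzero point; with that convention your proof is correct but essentially tautological, since $\rho_{\pm}(u,v)=\|u\|\lim_{t\to 0^{\pm}}\frac{\|u+tv\|-\|u\|}{t}$ and multiplying the one-sided derivatives by $\|u\|\neq 0$ is the whole content. The trouble is that this paper (see its Subsection 3.1, where smoothness at $u$ is defined by uniqueness of the support functional $F_u$) and the cited reference use the support-functional definition of smoothness, and under that definition your proof has a genuine gap: you never connect the equality $\rho_{-}=\rho_{+}$ to uniqueness of the supporting functional, which is the actual substance of the lemma.

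The missing step is the standard Hahn--Banach argument. For fixed $u\neq 0$, the map $v\mapsto\rho_{+}(u,v)$ is subadditive and positively homogeneous, $v\mapsto\rho_{-}(u,v)$ is the corresponding superlinear minorant, and a norm-one functional $f$ satisfies $f(u)=\|u\|$ if and only if $\rho_{-}(u,v)\leq\|u\|f(v)\leq\rho_{+}(u,v)$ for all $v$; moreover, for every $v_{0}$ and every $\lambda\in[\rho_{-}(u,v_{0}),\rho_{+}(u,v_{0})]$ the Hahn--Banach theorem produces a support functional $f$ at $u$ with $\|u\|f(v_{0})=\lambda$. Consequently the support functional at $u$ is unique exactly when the interval $[\rho_{-}(u,v),\rho_{+}(u,v)]$ degenerates for every $v$, i.e.\ when $\rho_{-}(u,\cdot)=\rho_{+}(u,\cdot)$. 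Your convexity remark (existence of the one-sided limits) is correct and worth keeping, and your treatment of $u=0$ is fine, but without the Hahn--Banach step your argument only proves the equivalence of (ii) with Gateaux differentiability, not with smoothness as the paper defines it; you should either add that step or state explicitly that you are invoking the classical equivalence of the two notions of smoothness.
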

\begin{lem} \label{b}
[see \cite{alsina2010norm}, Proposition 2.1.7] 
 Let $(X,\|.\|)$ be a real normed linear space. Then $u \perp_{B} v$ if and only if $\rho_{-}(u,v) \leq 0 \leq \rho_{+}(u,v)$ for all $u,v \in X$. 
\end{lem}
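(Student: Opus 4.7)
The plan is to prove both implications by exploiting convexity of the real-valued function $\phi(t) = \|u + tv\|^2$ together with the identification $\phi'_{\pm}(0) = 2\rho_{\pm}(u,v)$ that comes directly from the definition of the norm derivatives. Note that $\phi$ is convex, since $t \mapsto \|u+tv\|$ is convex by the triangle inequality and absolute homogeneity of the norm, and $x \mapsto x^{2}$ is convex and non-decreasing on $[0,\infty)$. The case $u = 0$ is trivial by property (v), so I would immediately reduce to $u \neq 0$.

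For the forward direction, I would assume $u \perp_{B} v$, so that $\|u + tv\|^{2} - \|u\|^{2} \geq 0$ for every $t \in \mathbb{R}$. Dividing by $2t > 0$ and sending $t \to 0^{+}$ yields $\rho_{+}(u,v) \geq 0$; dividing by $2t < 0$ and sending $t \to 0^{-}$ reverses the inequality and gives $\rho_{-}(u,v) \leq 0$. This half is essentially inspection once the one-sided limits are known to exist, which they do by convexity of $\phi$.

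The converse is the more delicate part, and the decisive input is the classical fact that difference quotients $\frac{\phi(t) - \phi(0)}{t}$ of a convex function are monotone non-decreasing in $t$. Assuming $\rho_{-}(u,v) \leq 0 \leq \rho_{+}(u,v)$, for $t > 0$ monotonicity gives $\frac{\phi(t) - \phi(0)}{t} \geq \phi'_{+}(0) = 2\rho_{+}(u,v) \geq 0$, hence $\phi(t) \geq \phi(0)$. For $t < 0$, monotonicity gives $\frac{\phi(t) - \phi(0)}{t} \leq \phi'_{-}(0) = 2\rho_{-}(u,v) \leq 0$, and multiplying through by the negative quantity $t$ flips the inequality to $\phi(t) \geq \phi(0)$. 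In either case $\|u + tv\|^{2} \geq \|u\|^{2}$, i.e.\ $u \perp_{B} v$.

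The main obstacle is precisely this converse step: one must upgrade a purely local condition at $t = 0$ (a pair of sign restrictions on one-sided derivatives) into a global inequality $\|u+tv\| \geq \|u\|$ for all $t \in \mathbb{R}$, and convexity of $\phi$ is the mechanism that converts the local into the global. Handling the sign flip for negative $t$ is a secondary place where one can easily slip, but apart from that the argument is bookkeeping.
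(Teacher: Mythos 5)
Your proof is correct: the paper itself offers no proof of this lemma (it is quoted from Alsina--Sikorska--Tom\'as, Proposition 2.1.7), and your argument via convexity of $\phi(t)=\|u+tv\|^2$, the identification $\phi'_{\pm}(0)=2\rho_{\pm}(u,v)$, and monotonicity of the difference quotients is exactly the standard argument used in that source. The only cosmetic point is that the reduction to $u\neq 0$ is unnecessary (the convexity argument covers $u=0$ as well), and the fact $\rho_{\pm}(0,v)=0$ follows from property (ii) with $\beta=0$ or by direct computation rather than from property (v).
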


\begin{lem} \label{c}
(see \cite{dragomir2004semi}) 
 Let $(X,\|.\|)$ be a real normed linear space with $u\neq 0$. Then there exists a number $t \in \mathbb{R}$ such that $u \perp_{B} tu+v$. 
\end{lem}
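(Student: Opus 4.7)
The plan is to reformulate the conclusion $u \perp_B tu+v$ via Lemma \ref{b} and then explicitly solve for $t$ using the translation-invariance-type property (iv) of the norm derivatives. In detail, by Lemma \ref{b} the statement $u \perp_B tu+v$ is equivalent to
\[
\rho_{-}(u, tu+v) \;\le\; 0 \;\le\; \rho_{+}(u, tu+v).
\]
Using property (iv) of the norm derivatives listed in the Preliminaries, applied to each of $\rho_{+}$ and $\rho_{-}$, the above becomes
\[
t\|u\|^{2} + \rho_{-}(u,v) \;\le\; 0 \;\le\; t\|u\|^{2} + \rho_{+}(u,v),
\]
or equivalently, since $u \neq 0$,
\[
-\frac{\rho_{+}(u,v)}{\|u\|^{2}} \;\le\; t \;\le\; -\frac{\rho_{-}(u,v)}{\|u\|^{2}}.
\]

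Hence the existence of a suitable $t$ reduces entirely to showing that this interval is non-empty. This is exactly property (i) of norm derivatives, namely $\rho_{-}(u,v) \le \rho_{+}(u,v)$, which gives $-\rho_{+}(u,v) \le -\rho_{-}(u,v)$ and therefore a non-empty closed interval of valid values for $t$. Any number in this interval works; a convenient canonical choice is
\[
t \;=\; -\,\frac{\rho(u,v)}{\|u\|^{2}} \;=\; -\,\frac{\rho_{-}(u,v)+\rho_{+}(u,v)}{2\|u\|^{2}},
\]
which always lies in the required interval.

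There is no real obstacle here: the argument is essentially a one-line reduction using Lemma \ref{b} and property (iv), with property (i) supplying non-emptiness of the resulting interval. The only mild point to be careful about is to invoke property (iv) with the correct sign convention for both $\rho_{+}$ and $\rho_{-}$ simultaneously, which is legitimate because the scalar $t\|u\|^2$ is real and the property holds for both one-sided derivatives.
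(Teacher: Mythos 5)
Your proof is correct. Note that the paper itself does not prove this lemma at all --- it is quoted from Dragomir's book as a known result --- so your argument actually supplies a self-contained proof using only tools already stated in the Preliminaries: the characterization $u \perp_{B} w \iff \rho_{-}(u,w)\le 0 \le \rho_{+}(u,w)$ (Lemma \ref{b}) together with property (iv), $\rho_{\pm}(u,tu+v)=t\|u\|^{2}+\rho_{\pm}(u,v)$, which reduces the claim to the non-emptiness of the interval $\bigl[-\rho_{+}(u,v)/\|u\|^{2},\,-\rho_{-}(u,v)/\|u\|^{2}\bigr]$, guaranteed by $\rho_{-}(u,v)\le\rho_{+}(u,v)$. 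This is the standard norm-derivative route to the existence statement, and your explicit choice $t=-\rho(u,v)/\|u\|^{2}$ (the midpoint of that interval) is valid.
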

\begin{thm}\label{oo}
[see \cite{alsina2010norm}, Proposition 1.4.5]
 Let $(X,\|.\|)$ be a real normed linear space of dimension greater than or equal to $2$. The space $X$ is an inner product space if and only if each two dimensional subspace of $X$ is an inner product space.
\end{thm}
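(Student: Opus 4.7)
The plan is to prove this by invoking the Jordan--von Neumann characterization of inner product spaces via the parallelogram law, and then observing that the parallelogram law is a ``two-vector'' condition that only ever involves the span of two vectors, which is at most two-dimensional.

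First I would dispose of the easy implication: if $X$ is an inner product space with inner product $\langle\cdot,\cdot\rangle$, then for any two-dimensional subspace $Y\subseteq X$, the restriction $\langle\cdot,\cdot\rangle|_{Y\times Y}$ is an inner product on $Y$ whose induced norm is exactly $\|\cdot\||_{Y}$, so $Y$ is an inner product space.

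For the converse, suppose every two-dimensional subspace of $X$ is an inner product space. I would use the Jordan--von Neumann theorem, which asserts that a normed space is an inner product space if and only if its norm satisfies the parallelogram identity
\begin{equation*}
\|u+v\|^2+\|u-v\|^2 = 2\|u\|^2 + 2\|v\|^2, \qquad \text{for all } u,v \in X.
\end{equation*}
Fix arbitrary $u,v\in X$. Let $Y=\operatorname{span}\{u,v\}$. If $\dim Y=2$, then by hypothesis $Y$ is an inner product space, hence the parallelogram identity holds for $u,v$. If $\dim Y\le 1$, then since $\dim X\ge 2$ we can enlarge $Y$ to a two-dimensional subspace $Y'\subseteq X$ containing $u$ and $v$; again the hypothesis says $Y'$ is an inner product space, so the parallelogram identity holds for $u,v$. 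Since $u,v$ were arbitrary, the parallelogram law holds on all of $X$, and Jordan--von Neumann finishes the proof.

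The main obstacle is really no more than recognizing that inner-product-space-hood is detected by a two-variable functional equation on the norm, so the hypothesis on two-dimensional subspaces is \emph{exactly} what is needed to verify that equation pointwise. One small subtlety to handle cleanly is the case when $u$ and $v$ are linearly dependent; the assumption $\dim X\ge 2$ lets us embed them into a two-dimensional subspace, so the argument goes through uniformly.
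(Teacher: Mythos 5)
Your argument is correct: the parallelogram identity involves only two vectors at a time, so validity on every two-dimensional subspace (enlarging the span when $u,v$ are linearly dependent, which $\dim X\ge 2$ permits) gives the identity on all of $X$, and the Jordan--von Neumann theorem then yields an inner product; the easy direction by restriction is also fine. Note that the paper does not prove this statement at all --- it is quoted as a known result from \cite{alsina2010norm} (Proposition 1.4.5) --- and your proof is essentially the standard argument found there, so there is no substantive divergence to report.
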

 
\section{Main Results}
First, we introduce an orthogonality relation as an extension of orthogonality relations based on norm derivatives $\rho_{\pm}$ denoted by $\rho_{\alpha,\beta}$. 

We define the notion of a  $\rho_{\alpha,\beta}$-orthogonality in a real normed linear space $(X,\|.\|)$:   
\[u \perp_{\rho_{\alpha,\beta}} v  ~\text{if and only if }~  \rho_{\alpha,\beta}(u,v)= \alpha \rho_{-}(u,v)+\beta\rho_{+}(u,v)= 0,\]  where $\alpha, \beta \in [0,1)$ with $0<\alpha + \beta < 1$ and $u,v\in X$.
\begin{rem}
\begin{enumerate}
\item[(a)]From the above definition, we have \[\perp_{\rho_{\alpha,\beta}}=
\begin{cases}
\perp_{\rho_{+}}, ~~ \text{if $\alpha=0, \beta\neq 0$}\\ 
\perp_{\rho_{-}},~~ \text{if $\beta=0, \alpha\neq 0$}\\
\perp_{\rho},~~~\text{if $\alpha\neq0\neq \beta$}.
\end{cases}\]
So, $\perp_{\rho_{\alpha,\beta}}$ is an extension orthogonality in normed space.\\
\item[(b)]   
Here, the $\rho_{\alpha,\beta}$-orthogonality class is more general than that of $\rho_\lambda$-orthogonality introduced by Zamani and Moslehian \cite{zamani2019extension}. \\ 
In the following example, we show that there are infinitely many vectors which satisfy $\perp_{\rho_{\alpha,\beta}}$ but not $\perp_{\rho_{\lambda}}$.
\begin{ex}
Suppose that $X=\mathbb{R}^2$ induced with a norm $\|(u,v)\|= \max\{|u|,|v|\}$. Let $u= (1,1), v= (-\frac{1}{2\alpha},\frac{1}{2\beta})\in X$, for $\alpha,\beta \in (0,1)$ with $\alpha+\beta<1$. Then  
$$\rho_{+}(u,v)=\frac{1}{2\beta},$$
$$\rho_{-}(u,v)=-\frac{1}{2\alpha},$$
$$\rho_{\alpha,\beta}(u,v)=(\alpha)(-\frac{1}{2\alpha})+(\beta)(\frac{1}{2\beta})=0.$$
So $u \perp_{\rho_{\alpha,\beta}} v$.\\

Here, $ \rho_{\lambda}(u,v)= \lambda \rho_{-}(u,v)+(1-\lambda)\rho_{+}(u,v)=\lambda(-\frac{1}{2\alpha})+(1-\lambda)(\frac{1}{2\beta})$.\\
Now, $u \perp_{\rho_\lambda} v$ holds if $\rho_\lambda(u,v)=0$. This is only possible when $\alpha=\lambda$ and $\beta=1-\lambda$, but by our definition $\alpha+\beta<1$. So we cannot find a $\lambda$ such that $\rho_\lambda(u,v)=0$. Therefore $u \not\perp_{\rho_\lambda} v$.

So, the class of orthogonality introduced in this paper is more general than that of Zamani and Moslehian \cite{zamani2019extension}.
\end{ex}

\end{enumerate}
\end{rem}

First, we give some properties of $\rho_{\alpha,\beta}$ based on norm derivatives $\rho_{\pm}$, which will be used in the sequel. 
\begin{prop}\label{k}
Let $(X,\|.\|)$ be a real normed linear space and $\rho_{\alpha,\beta}(u,v)= \alpha \rho_{-}(u,v)+\beta\rho_{+}(u,v),$  where $\alpha, \beta \in [0,1)$ with $0<\alpha + \beta < 1$. Then for all $u,v\in X$, we get\\
(i) $\rho_{\alpha,\beta}(u,u)=(\alpha+\beta)\|u\|^2<\|u\|^2$.\\
 (ii) $\rho_{\alpha,\beta}(tu,v)=t\rho_{\alpha,\beta}( u,v)=\rho_{\alpha,\beta}( u,tv)$, where $ t \geq 0$.\\
 (iii) $\rho_{\alpha,\beta}(tu,v)=t\rho_{\beta,\alpha}( u,v)=\rho_{\alpha,\beta}( u,tv)$, where $ t < 0$.\\
(iv) $\rho_{\alpha,\beta}(u,t u+v)=(\alpha+\beta)t\|u\|^2+\rho_{\alpha,\beta}(u,v)$, $t\in \mathbb{R}$.\\
(v) $|\rho_{\alpha,\beta}(u,v)|\leq(\alpha+\beta)\|u\|\|v\|<\|u\|\|v\|$.
\end{prop}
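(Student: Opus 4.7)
The plan is to deduce each of the five items directly from the corresponding properties of the one-sided norm derivatives $\rho_{\pm}$ listed in the preliminaries (items (i)--(vii) there), together with the linearity of the defining combination $\rho_{\alpha,\beta}(u,v)=\alpha\rho_{-}(u,v)+\beta\rho_{+}(u,v)$. Nothing deep is required; the only point that needs a little care is the sign flip in part (iii), which is where the pair $(\alpha,\beta)$ must be swapped to $(\beta,\alpha)$.

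First I would dispense with (i) by plugging $v=u$ and using property (v) of $\rho_{\pm}$ to get $\rho_{-}(u,u)=\rho_{+}(u,u)=\|u\|^{2}$, so $\rho_{\alpha,\beta}(u,u)=(\alpha+\beta)\|u\|^{2}$; the strict inequality is then immediate from the hypothesis $\alpha+\beta<1$. For (ii), I would apply property (ii) of $\rho_{\pm}$, namely $\rho_{\pm}(tu,v)=t\rho_{\pm}(u,v)=\rho_{\pm}(u,tv)$ for $t\geq 0$, to each of the two summands in the definition of $\rho_{\alpha,\beta}$ and factor out $t$.

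The key step is (iii): for $t<0$, property (iii) of $\rho_{\pm}$ gives $\rho_{\pm}(tu,v)=t\rho_{\mp}(u,v)$, so
\[
\rho_{\alpha,\beta}(tu,v)=\alpha\rho_{-}(tu,v)+\beta\rho_{+}(tu,v)=\alpha t\rho_{+}(u,v)+\beta t\rho_{-}(u,v)=t\bigl(\beta\rho_{-}(u,v)+\alpha\rho_{+}(u,v)\bigr),
\]
which is precisely $t\rho_{\beta,\alpha}(u,v)$; the identical manipulation on the second argument $tv$ closes this part. For (iv) I would just insert property (iv) of $\rho_{\pm}$, namely $\rho_{\pm}(u,tu+v)=t\|u\|^{2}+\rho_{\pm}(u,v)$, into both summands, add, and collect the $(\alpha+\beta)t\|u\|^{2}$ term. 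Finally, for (v) I would combine the triangle inequality with property (vi), $|\rho_{\pm}(u,v)|\leq\|u\|\|v\|$, obtaining $|\rho_{\alpha,\beta}(u,v)|\leq(\alpha+\beta)\|u\|\|v\|$, and then invoke $\alpha+\beta<1$ once more for the strict bound.

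The whole proof is essentially bookkeeping, and I do not anticipate any real obstacle beyond remembering that the $(-)$-derivative becomes a $(+)$-derivative (and vice versa) upon multiplying the first slot by a negative scalar — this is exactly what forces the $(\alpha,\beta)\mapsto(\beta,\alpha)$ swap in (iii) and is the only place where an inattentive reader might err.
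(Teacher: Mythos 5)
Your proposal is correct and follows essentially the same route as the paper: each item is obtained by substituting the corresponding standard property of the one-sided norm derivatives $\rho_{\pm}$ into the defining combination $\alpha\rho_{-}+\beta\rho_{+}$, with the sign-flip property $\rho_{\pm}(tu,v)=t\rho_{\mp}(u,v)$ for $t<0$ producing the $(\alpha,\beta)\mapsto(\beta,\alpha)$ swap in (iii) and the hypothesis $\alpha+\beta<1$ giving the strict inequalities in (i) and (v). No gaps; your handling of the second argument in (iii), which the paper leaves implicit, is a harmless extra check.
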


\begin{proof}
(i) For all $u\in X$, we have
\begin{eqnarray*}
\rho_{\alpha,\beta}(u,u)&=& \alpha \rho_{-}(u,u)+\beta\rho_{+}(u,u)\\
&=& \alpha \|u\|^2+\beta\|u\|^2\\
&=& (\alpha+\beta)\|u\|^2\\
&<& \|u\|^2.
\end{eqnarray*}
(ii) For all $u,v \in X$ and $ t \geq 0$, we have 
\begin{eqnarray*}
\rho_{\alpha,\beta}(tu,v) &=& \alpha \rho_{-}(tu,v)+\beta\rho_{+}(tu,v)\\
&=&  \alpha t \rho_{-}(u,v)+\beta t\rho_{+}(u,v)\\
&=& t(\alpha \rho_{-}(u,v)+\beta\rho_{+}(u,v))\\
&=& t\rho_{\alpha,\beta}(u,v).
\end{eqnarray*}
(iii) For all $u,v \in X$ and $ t < 0$, we have 
\begin{eqnarray*}
\rho_{\alpha,\beta}(tu,v) &=& \alpha \rho_{-}(tu,v)+\beta\rho_{+}(tu,v)\\
&=&  \alpha t \rho_{+}(u,v)+\beta t\rho_{-}(u,v)\\
&=& t(\alpha \rho_{+}(u,v)+\beta\rho_{-}(u,v))\\
&=& t\rho_{\beta,\alpha}(u,v).
\end{eqnarray*}
(iv) For all $u,v \in X$ and $t\in \mathbb{R}$, we have
\begin{eqnarray*}
\rho_{\alpha,\beta}(u,tu+v)&=&\alpha \rho_{-}(u,tu+v)+\beta\rho_{+}(u,tu+v)\\
&=& \alpha(t\|u\|^2+ \rho_{-}(u,v))+\beta(t\|u\|^2+\rho_{+}(u,v))\\
&=&(\alpha+\beta)t\|u\|^2+\alpha \rho_{-}(u,v)+\beta\rho_{+}(u,v)\\
&=&(\alpha+\beta)t\|u\|^2+\rho_{\alpha,\beta}(u,v)\\
&<& t\|u\|^2+\rho_{\alpha,\beta}(u,v).
\end{eqnarray*}
(v) For all $u,v\in X$, we have
\begin{eqnarray*}
|\rho_{\alpha,\beta}(u,v)|&=&|\alpha \rho_{-}(u,v)+\beta\rho_{+}(u,v)|\\
&\leq& \alpha |\rho_{-}(u,v)|+\beta|\rho_{+}(u,v)|\\
&\leq& \alpha\|u\|\|v\|+\beta\|u\|\|v\|\\
&\leq& (\alpha+\beta)\|u\|\|v\|\\
&<& \|u\|\|v\|.
\end{eqnarray*}

\end{proof}
\begin{thm}\label{thm3.2}
Let $(X,\|.\|)$ be a real normed linear space. Then 
$\perp_{\rho_{\alpha,\beta}} \subset \perp_{B}$.
\end{thm}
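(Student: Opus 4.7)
The plan is to reduce the containment $\perp_{\rho_{\alpha,\beta}} \subset \perp_B$ to a single sandwich estimate via Lemma \ref{b}. Recall that lemma characterizes Birkhoff--James orthogonality as $u \perp_B v$ iff $\rho_{-}(u,v) \leq 0 \leq \rho_{+}(u,v)$. So, starting from the hypothesis $u \perp_{\rho_{\alpha,\beta}} v$, which unpacks to
\[
\alpha \rho_{-}(u,v) + \beta \rho_{+}(u,v) = 0
\]
with $\alpha, \beta \in [0,1)$ and $\alpha + \beta > 0$, it suffices to extract the two one-sided inequalities on $\rho_{\pm}(u,v)$.

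The key ingredient is property (i) of the preliminaries, $\rho_{-}(u,v) \leq \rho_{+}(u,v)$. Because $\alpha, \beta \geq 0$, I would plug the inequality $\rho_{-}(u,v) \leq \rho_{+}(u,v)$ first into the $\rho_{+}$ term to obtain
\[
0 = \alpha \rho_{-}(u,v) + \beta \rho_{+}(u,v) \geq (\alpha+\beta)\rho_{-}(u,v),
\]
and then into the $\rho_{-}$ term to get
\[
0 = \alpha \rho_{-}(u,v) + \beta \rho_{+}(u,v) \leq (\alpha+\beta)\rho_{+}(u,v).
\]
Dividing by the positive quantity $\alpha + \beta$ in each line yields $\rho_{-}(u,v) \leq 0$ and $\rho_{+}(u,v) \geq 0$, so by Lemma \ref{b} we conclude $u \perp_B v$.

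Honestly, there is no serious obstacle; the only subtlety is remembering that the standing hypothesis $\alpha + \beta > 0$ is exactly what allows the sandwich to be nontrivial (without it the defining equation is vacuous). If the theorem is meant to assert strict containment, then the example in the preceding remark, or a comparable rectangular-norm example on $\mathbb{R}^2$, already exhibits a pair that is Birkhoff--James orthogonal but fails $\rho_{\alpha,\beta}$-orthogonality for suitable $\alpha,\beta$, so strictness would follow as an immediate afterthought.
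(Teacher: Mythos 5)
Your proof is correct and takes essentially the same route as the paper: use $\rho_{-}(u,v)\leq\rho_{+}(u,v)$ together with $\alpha,\beta\geq 0$ to sandwich $0$ between $(\alpha+\beta)\rho_{-}(u,v)$ and $(\alpha+\beta)\rho_{+}(u,v)$, divide by $\alpha+\beta>0$, and invoke Lemma \ref{b}. If anything, your version is slightly cleaner, since the paper asserts the strict inequalities $\rho_{-}(u,v)<0<\rho_{+}(u,v)$ (citing $\alpha+\beta<1$) when the argument only yields the non-strict ones --- which is all that Lemma \ref{b} requires and is exactly what you state.
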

\begin{proof}
Suppose that $u\perp_{\rho_{\alpha,\beta}}v$ for all $u,v\in X$. As $\rho_{-}(u,v)\leq \rho_{+}(u,v)$,  we get 
\begin{align}\label{k1}
0&=\rho_{\alpha,\beta}(u,v)\notag\\
&=\alpha \rho_{-}(u,v)+\beta\rho_{+}(u,v)\notag\\
&\le \alpha \rho_{+}(u,v)+\beta\rho_{+}(u,v)\notag\\
&=(\alpha+\beta)\rho_{+}(u,v).
\end{align} 

It implies that $\rho_{+}(u,v)> 0$ as $\alpha+\beta<1$. 
Using the similar arguments, we get $\rho_{-}(u,v)< 0$. Therefore, $\rho_{-}(u,v) < 0 < \rho_{+}(u,v)$ and using Lemma \ref{b}, we get $u \perp_{B} v$.  Hence $\perp_{\rho_{\alpha,\beta}} \subset \perp_{B}$.
\end{proof}
The converse of the above theorem is not true in general. 
\begin{ex}
Let  $X=\mathbb{R}^2$ induced with a norm $\|(u,v)\|= \max\{|u|,|v|\}$. Choose  $u= (1,1), v= (1,-1)\in X$ and $\alpha\neq\beta$. Then $$ \|u\|=1,\|v\|=1,$$
$$\rho_{+}(u,v)=1,$$
$$\rho_{-}(u,v)=-1,$$
$$\rho_{\alpha,\beta}(u,v)=-\alpha+\beta,$$
for $\alpha,\beta \in (0,1)$ with $\alpha+\beta<1$.
Therefore, $\rho_{-}(u,v)\leq 0 \leq  \rho_{+}(u,v) $, and using Lemma \ref{b}, we get $u \perp_{B} v$.\\
Now, for $\alpha=\frac{1}{2}$ and $\beta=\frac{1}{3}$ , we have  $$\rho_{\alpha,\beta}(u,v)=
-\frac{1}{2}+\frac{1}{3} \neq 0.$$
So, $u \not\perp_{\alpha,\beta} v$.
\end{ex}
Smoothness is an important property that characterizes how well-behaved the norm is at each point in the space. It is well-known that $\perp_{B}=\perp_{\rho_{\pm}}$ and $\perp_{B}=\perp_{\rho}$ in a real normed linear space when $X$ is smooth. Now, we establish the following result as an analogue of these findings. 
\begin{thm}
 For $\alpha\neq\beta$, $\perp_{B} =\perp_{\rho_{\alpha,\beta}}$ if and only if $X$ is smooth space.
\end{thm}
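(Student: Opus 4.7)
The plan is to prove the two directions separately, with the forward direction being essentially a quick application of Lemmas \ref{a} and \ref{b}, and the reverse direction being the interesting one where the hypothesis $\alpha \neq \beta$ genuinely plays a role.

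For the forward direction, assume $X$ is smooth. By Lemma \ref{a} we have $\rho_-(u,v)=\rho_+(u,v)$ for all $u,v\in X$, so $\rho_{\alpha,\beta}(u,v)=(\alpha+\beta)\rho_+(u,v)$. Since $\alpha+\beta>0$, the condition $u\perp_{\rho_{\alpha,\beta}} v$ is equivalent to $\rho_+(u,v)=0$, which is the same as $\rho_-(u,v)\le 0\le \rho_+(u,v)$, i.e.\ $u\perp_B v$ by Lemma \ref{b}. Thus $\perp_B=\perp_{\rho_{\alpha,\beta}}$.

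For the reverse direction, I would argue by contradiction. Suppose $\perp_B=\perp_{\rho_{\alpha,\beta}}$ but $X$ is not smooth. Then by Lemma \ref{a} there exist $u_0,v_0\in X$ with $\rho_-(u_0,v_0)<\rho_+(u_0,v_0)$ (strict, using property (i)). By Lemma \ref{c}, choose $t\in\mathbb{R}$ with $u_0\perp_B tu_0+v_0$, and set $v=tu_0+v_0$. By property (iv), $\rho_\pm(u_0,v)=t\|u_0\|^2+\rho_\pm(u_0,v_0)$, so we still have $\rho_-(u_0,v)<\rho_+(u_0,v)$. Now, by Theorem \ref{thm3.2} we already have $\perp_{\rho_{\alpha,\beta}}\subset\perp_B$, so the assumed equality gives $\perp_B\subset\perp_{\rho_{\alpha,\beta}}$; in particular $u_0\perp_{\rho_{\alpha,\beta}} v$ and, since Birkhoff–James orthogonality is homogeneous, also $u_0\perp_B (-v)$, hence $u_0\perp_{\rho_{\alpha,\beta}}(-v)$.

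The key step is then to expand both $\rho_{\alpha,\beta}$-orthogonality relations. The first yields
\[\alpha\rho_-(u_0,v)+\beta\rho_+(u_0,v)=0,\]
while the second, after using property (iii) (which gives $\rho_\pm(u_0,-v)=-\rho_\mp(u_0,v)$), yields
\[\alpha\rho_+(u_0,v)+\beta\rho_-(u_0,v)=0.\]
Subtracting these two equations, I get $(\alpha-\beta)\bigl(\rho_-(u_0,v)-\rho_+(u_0,v)\bigr)=0$. Since $\alpha\neq\beta$ by hypothesis, this forces $\rho_-(u_0,v)=\rho_+(u_0,v)$, contradicting the strict inequality established above. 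Hence $X$ must be smooth.

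The only potential obstacle I see is confirming the sign behavior for negative scalars (i.e.\ correctly applying property (iii) to $-v$), but the listed properties spell that out explicitly, so the argument should be clean. The hypothesis $\alpha\neq\beta$ is essential precisely to make the $2\times 2$ linear system in $\rho_\pm(u_0,v)$ nondegenerate.
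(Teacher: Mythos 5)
Your proof is correct and follows essentially the same route as the paper: the forward direction via Lemmas \ref{a} and \ref{b}, and the converse via Lemma \ref{c} plus a sign-flipped orthogonality relation (you negate the second vector where the paper negates the first, which is equivalent by homogeneity), yielding the two linear equations whose difference exploits $\alpha\neq\beta$. No gaps.
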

\begin{proof}
Let $X$ be a smooth space and $u \perp_{B} v$. 
Then by using Lemma \ref{b}, we have $\rho_{-}(u,v)\leq 0 \leq  \rho_{+}(u,v)$. Using Lemma \ref{a} we have, if $X$ is smooth, then $\rho_{-}(u,v)=\rho_{+}(u,v)$. So, $\rho_{-}(u,v)=\rho_{+}(u,v)=0$. It implies that $\rho_{\alpha,\beta}(u,v)=\alpha \rho_{-}(u,v)+\beta\rho_{+}(u,v)= 0$.
So $u \perp_{\rho_{\alpha,\beta}} v$ and $\perp_{B} \subseteq \perp_{\rho_{\alpha,\beta}}$.
Therefore, by using Theorem \ref{thm3.2}, we get $\perp_{B} =\perp_{\rho_{\alpha,\beta}}$.\\
Conversely, let $\perp_{B} =\perp_{\rho_{\alpha,\beta}}$. We may assume that $u\neq 0$ otherwise $\rho_{-}(u,v)=\rho_+(u,v)$, that is $X$ is smooth. By Lemma \ref{c}, there exists a number $t \in \mathbb{R}$ such that $u \perp_{B} tu+v$. Therefore, $u \perp_{\rho_{\alpha,\beta}} tu+v$. Using the Proposition \ref{k}$(iv)$
we have,
\begin{align}\label{d}
(\alpha+\beta)t\|u\|^2+\rho_{\alpha,\beta}(u,v)=0.
\end{align}
Also, we have  $-u \perp_{B} tu+v$ and so $-u \perp_{\rho_{\alpha,\beta}} tu+v$. Using the Proposition \ref{k}$(iii),(iv)$
we get
\begin{align}\label{e}
-t(\alpha+\beta)\|u\|^2-\rho_{\beta,\alpha}(u,v)=0.
\end{align}
By adding \eqref{d} and \eqref{e}, we have
\begin{align*}
\rho_{\alpha,\beta}(u,v)&= \rho_{\beta,\alpha}(u,v).
\end{align*}
This result implies that $\rho_{+}(u,v)=\rho_{-}(u,v)$. Hence $X$ is smooth.
\end{proof}

Here it is interesting to note that the relation $\perp_{\rho_{-}}$, $\perp_{\rho{+}}$, $\perp_{\rho}$ and $\perp_{\rho_{\alpha,\beta}}$ are generally incomparable. 
In the following examples, we show that these orthogonalities are incomparable.
\begin{ex}
Consider a real normed linear space $\mathbb{R}^2$ with the norm $\|(u,v)\|= |u|+|v|$. Let $u= (1,0)$, $v= (1,1)$, $z=(-1,1)$ and $w=(0,2)$. Then it is easy to compute for $\alpha\neq\beta$ and $\alpha\neq0\neq \beta$ with $0<\alpha+\beta<1$\\

$\rho_{-}(u,v)=0$, $\rho_{+}(u,v)=2$, $\rho_{\alpha,\beta}(u,v)=2\beta,$\\

$\rho_{-}(u,z)=-2$, $\rho_{+}(u,z)=0$, $\rho_{\alpha,\beta}(u,z)=-2\alpha$,\\

and

$\rho_{-}(u,w)=-2$, $\rho_{+}(u,w)=2$, $\rho(u,w)=0$, $\rho_{\alpha,\beta}(u,w)=-2\alpha+2\beta $.\\
Therefore $\perp_{\rho_{-}} \nsubseteq\perp_{\rho_{\alpha,\beta}}$, $\perp_{\rho_{+}} \nsubseteq\perp_{\rho_{\alpha,\beta}}$, $\perp_{\rho} \nsubseteq\perp_{\rho_{\alpha,\beta}}$.
\end{ex}
\begin{ex}
Consider the real normed linear space $X=\mathbb{R}^2$ with the norm $\|(u,v)\|= \max\{|u|,|v|\}$. Let $u= (1,1)$, $v= (-\frac{1}{2\alpha},\frac{1}{2\beta})$, $\alpha\neq0\neq \beta$ with $0<\alpha+\beta<1$. Then, it is easy to compute
$$\rho_{+}(u,v)=\frac{1}{2\beta},$$
$$\rho_{-}(u,v)=-\frac{1}{2\alpha},$$
$$\rho_{\alpha,\beta}(u,v)=(\alpha)(-\frac{1}{2\alpha})+(\beta)(\frac{1}{2\beta})=0.$$
So $u \perp_{\rho_{\alpha,\beta}} v$.\\
Now, taking $\alpha=\frac{1}{2}$ and $\beta=\frac{1}{3}$ , we have  $$\rho_{+}(u,v)=\frac{2}{3} \neq 0,$$
$$\rho_{-}(u,v)=-1 \neq 0,$$
$$\rho(u,v)=\frac{\frac{2}{3}-1}{2} \neq 0.$$
Therefore, $\perp_{\rho_{\alpha,\beta}} \nsubseteq \perp_{\rho_{-}}$, $\perp_{\rho_{\alpha,\beta}} \nsubseteq \perp_{\rho_{+}}$, $\perp_{\rho_{\alpha,\beta}} \nsubseteq \perp_{\rho}$.
\end{ex}

Based on $\rho_{\alpha,\beta}$-orthogonality, the following result characterises smooth normed spaces.

\begin{thm}
For $\alpha\neq\beta$, $\perp_{s} =\perp_{\rho_{\alpha,\beta}}$ if and only if $X$ is smooth space
where $[., .]$ is a semi-inner
product in a normed space $X$ over a field C and $u, v \in X$. 
\end{thm}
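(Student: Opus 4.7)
The plan is to mirror the structure of the preceding theorem ($\perp_B = \perp_{\rho_{\alpha,\beta}}$), using $\perp_s$ instead of $\perp_B$. The key simplification is that semi-orthogonality supplies an explicit scalar $t$ directly from the semi-inner product, so I will not need Lemma \ref{c}.

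For the forward direction (smoothness $\Rightarrow \perp_s = \perp_{\rho_{\alpha,\beta}}$), I would invoke Lemma \ref{a} to get $\rho_-(u,v) = \rho_+(u,v)$ everywhere, and then combine this with the standard sandwich inequality $\rho_-(u,v) \le [v,u] \le \rho_+(u,v)$ (valid for any compatible semi-inner product on a real normed space) to conclude $[v,u] = \rho_\pm(u,v)$. Both orthogonalities then collapse to the single condition $\rho_+(u,v) = 0$, so $\perp_s$ and $\perp_{\rho_{\alpha,\beta}}$ coincide.

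For the converse, fix $v \in X$ and $u \neq 0$ (the $u=0$ case is trivial as both functionals vanish). Set $t = -[v,u]/\|u\|^2$ and $w = tu+v$; linearity of the semi-inner product in the first slot gives $[w,u] = 0$, hence $u \perp_s w$, and similarly $[w,-u] = -[w,u] = 0$ gives $-u \perp_s w$. The hypothesis $\perp_s = \perp_{\rho_{\alpha,\beta}}$ then yields $\rho_{\alpha,\beta}(u,w)=0$ and $\rho_{\alpha,\beta}(-u,w)=0$. Applying Proposition \ref{k}(iii) to the second relation converts it to $\rho_{\beta,\alpha}(u,w)=0$, and Proposition \ref{k}(iv) turns both into
\begin{align*}
(\alpha+\beta)t\|u\|^2 + \rho_{\alpha,\beta}(u,v) &= 0, \\
(\alpha+\beta)t\|u\|^2 + \rho_{\beta,\alpha}(u,v) &= 0.
\end{align*}
Subtracting gives $\rho_{\alpha,\beta}(u,v) = \rho_{\beta,\alpha}(u,v)$, i.e.\ $(\alpha-\beta)\bigl(\rho_-(u,v)-\rho_+(u,v)\bigr)=0$. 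Since $\alpha\neq\beta$, we conclude $\rho_-(u,v)=\rho_+(u,v)$ for every $u,v$, and Lemma \ref{a} delivers smoothness.

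The main obstacle I expect is the cleanness of the forward direction: the identification $[v,u]=\rho_\pm(u,v)$ in a smooth space should be cited carefully, because in a general normed space the semi-inner product is not unique, and the argument relies on the sandwich inequality rather than on any property developed in the excerpt itself. The converse, by contrast, is a purely algebraic parallel of the $\perp_B = \perp_{\rho_{\alpha,\beta}}$ proof and goes through using only Proposition \ref{k} together with linearity of $[\cdot,\cdot]$ in its first slot.
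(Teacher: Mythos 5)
Your proof is correct, but it routes the two implications differently from the paper. For the implication ``$\perp_s=\perp_{\rho_{\alpha,\beta}}\Rightarrow X$ smooth'' the paper uses the inclusion $\perp_{\rho_{\alpha,\beta}}\subseteq\perp_s$: it takes $z=-\frac{\rho_{\alpha,\beta}(u,v)}{(\alpha+\beta)\|u\|^2}u+v$, so that $u\perp_{\rho_{\alpha,\beta}}z$ by Proposition \ref{k}(iv), transfers this to $[z,u]=0$, and obtains the identity $\rho_{\alpha,\beta}(u,v)=(\alpha+\beta)[v,u]$, from which smoothness is asserted (the paper leaves the last step implicit; one completes it, e.g., by replacing $v$ with $-v$ to also get $\rho_{\beta,\alpha}(u,v)=(\alpha+\beta)[v,u]$ and subtracting, using $\alpha\neq\beta$). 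You instead use the reverse inclusion $\perp_s\subseteq\perp_{\rho_{\alpha,\beta}}$, building $w=tu+v$ with $t=-[v,u]/\|u\|^2$ and exploiting both $u\perp_s w$ and $-u\perp_s w$, exactly mirroring the paper's proof that $\perp_B=\perp_{\rho_{\alpha,\beta}}$ characterizes smoothness; your explicit conclusion $(\alpha-\beta)(\rho_--\rho_+)=0$ is in fact more complete than the paper's terse ``this implies smoothness.'' For the direction ``$X$ smooth $\Rightarrow$ the relations coincide,'' the paper simply asserts $\rho_-(u,v)=\rho_+(u,v)=[v,u]$ (the uniqueness of the semi-inner product in a smooth space, per Giles/Dragomir), while you justify the same identity via the sandwich inequality $\rho_-(u,v)\le[v,u]\le\rho_+(u,v)$; this inequality is a standard fact (the support functional $v\mapsto[v,u]/\|u\|$ lies between the one-sided norm derivatives, see Dragomir's book cited in the paper), so your concern about citing it is well placed but not a gap. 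Net effect: your argument is equivalent in strength, arguably more self-contained in the hard direction, while relying on the same external semi-inner-product facts in the easy direction as the paper does.
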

\begin{proof}

 First assume that $\perp_{\rho_{\alpha,\beta}}= \perp_{s}$.
 Let $u,v \in X$. Taking $z=-\frac{{\rho_{\alpha,\beta}(u,v)}}{(\alpha+\beta)\|u\|^2}u+v$. Using Lemma \ref{k}(iv), we have $u\perp_{\rho_{\alpha,\beta}}z$. Therefore $u\perp_s z$. This implies that 
 \begin{align*}
     0=[z,u]=-\frac{{\rho_{\alpha,\beta}(u,v)}}{(\alpha+\beta)\|u\|^2}\|u\|^2+[v,u].
 \end{align*}
  Therefore ${\rho_{\alpha,\beta}}(u,v)={(\alpha+\beta)[v,u]}$. This result implies that $\rho_-(u,v)=\rho_+(u,v)$ and $X$ is smooth.\\
  Conversely let $X$ is smooth. Therefore $\rho_-(u,v)=\rho_+(u,v)=[v,u]$. By the definition of ${\rho_{\alpha,\beta}(u,v)}$, we get ${\rho_{\alpha,\beta}(u,v)}=(\alpha+\beta)[v,u]$. Hence $\perp_{\rho_{\alpha,\beta}}= \perp_{s}$.
  \end{proof}
\begin{thm}
Let $(X,\|.\|)$ be a normed space. For $\alpha\neq\beta$, the following conditions are equivalent:
\begin{itemize}
\item[(i)]$\perp_{\rho}=\perp_{\rho_{\alpha,\beta}}$.
\item[(ii)] $X$ is smooth. 
\end{itemize}
\end{thm}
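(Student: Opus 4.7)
The plan is to prove the two implications separately, using Lemma \ref{a} in both directions and exploiting the fact that $\alpha\neq\beta$ only in the reverse direction.

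For the direction (ii)$\Rightarrow$(i), I would start from smoothness and apply Lemma \ref{a} to conclude $\rho_-(u,v)=\rho_+(u,v)$ for every $u,v\in X$. Then by their definitions we have $\rho(u,v)=\rho_+(u,v)$ and $\rho_{\alpha,\beta}(u,v)=(\alpha+\beta)\rho_+(u,v)$. Since $\alpha+\beta>0$, both functionals vanish at exactly the same pairs, which gives $\perp_\rho=\perp_{\rho_{\alpha,\beta}}$.

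For the converse (i)$\Rightarrow$(ii), I would mimic the technique used in the previous theorem. Fix $u,v\in X$ with $u\neq 0$. Choose the scalar $t=-\rho(u,v)/\|u\|^2$ and set $w=tu+v$. Using the analogue of Proposition \ref{k}(iv) for $\rho$ (which follows from property (iv) of $\rho_\pm$), we get $\rho(u,w)=0$, i.e.\ $u\perp_\rho w$. By symmetry of $\rho$ in the sign of its first argument (property (iii) of $\rho_\pm$ gives $\rho(-u,w)=-\rho(u,w)=0$), we also have $-u\perp_\rho w$. The hypothesis now forces both $u\perp_{\rho_{\alpha,\beta}} w$ and $-u\perp_{\rho_{\alpha,\beta}} w$. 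Expanding the first equality via Proposition \ref{k}(iv) yields
\begin{equation*}
(\alpha+\beta)t\|u\|^2+\alpha\rho_-(u,v)+\beta\rho_+(u,v)=0,
\end{equation*}
while expanding the second via Proposition \ref{k}(iii) and (iv) yields
\begin{equation*}
(\alpha+\beta)t\|u\|^2+\beta\rho_-(u,v)+\alpha\rho_+(u,v)=0.
\end{equation*}
Subtracting gives $(\alpha-\beta)\bigl(\rho_-(u,v)-\rho_+(u,v)\bigr)=0$, and since $\alpha\neq\beta$ we conclude $\rho_-(u,v)=\rho_+(u,v)$. The case $u=0$ is trivial. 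Lemma \ref{a} then delivers smoothness of $X$.

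The main obstacle is the reverse direction: simply applying the hypothesis to a single pair cannot separate $\rho_-$ from $\rho_+$, so the essential trick is to leverage the sign-asymmetry of the norm derivatives by testing with both $u$ and $-u$. This is exactly where the assumption $\alpha\neq\beta$ is used, since the substitution $u\mapsto -u$ swaps the roles of $\alpha$ and $\beta$ inside $\rho_{\alpha,\beta}$, producing two linearly independent equations whose difference isolates $\rho_-(u,v)-\rho_+(u,v)$.
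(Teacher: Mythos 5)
Your proof is correct, and the heart of your reverse direction coincides with the paper's: both test the hypothesis on $w=-\frac{\rho(u,v)}{\|u\|^2}u+v$ and expand $\rho_{\alpha,\beta}(u,w)=0$ via Proposition \ref{k}(iv). The only divergence is how the algebra is closed. The paper stops after the single identity $\rho_{\alpha,\beta}(u,v)=(\alpha+\beta)\rho(u,v)$, which already forces $\rho_{-}(u,v)=\rho_{+}(u,v)$: writing it out gives $\alpha\rho_{-}+\beta\rho_{+}=\frac{\alpha+\beta}{2}(\rho_{-}+\rho_{+})$, i.e. $\frac{\alpha-\beta}{2}\bigl(\rho_{-}(u,v)-\rho_{+}(u,v)\bigr)=0$, and $\alpha\neq\beta$ finishes the argument. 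So your closing claim that applying the hypothesis to a single pair ``cannot separate $\rho_{-}$ from $\rho_{+}$'' is not accurate in this setting: since $\rho$ is the symmetric average while $\rho_{\alpha,\beta}$ is asymmetric precisely when $\alpha\neq\beta$, one equation suffices, and your second test with $-u$ (the trick genuinely needed in the Birkhoff-orthogonality theorem, where the defining relation carries no quantitative information) is redundant here, though harmless --- your two equations and their subtraction are valid and use $\alpha\neq\beta$ correctly. The forward direction is the same in both proofs, with your version spelling out the detail ($\alpha+\beta>0$, so the two functionals have identical zero sets) that the paper leaves implicit.
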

\begin{proof}
First we prove that $(i)$ implies $(ii)$.\\
 Let $\perp_{\rho} =\perp_{\rho_{\alpha,\beta}}$. Then we have $u \perp_{\rho}(\frac{-\rho(u,v)}{\|u\|^2}u+v)$. Therefore, $u \perp_{\rho_{\alpha,\beta}} (\frac{-\rho(u,v)}{\|u\|^2}u+v)$ and so $\rho_{\alpha,\beta}(u,\frac{-\rho(u,v)}{\|u\|^2}u+v)=0$. Using the Proposition \ref{k}(iv) we have,
\begin{align*}\label{f}
 -(\alpha+\beta)\rho(u,v)+\rho_{\alpha,\beta}(u,v)=0.
\end{align*}
This result implies that $\rho_{+}(u,v)=\rho_{-}(u,v)$. Hence $X$ is smooth.\\
Suppose that $X$ is smooth and so, $\rho_{+}(u,v)=\rho_{-}(u,v)$. From the definition of $\rho$ and $\rho_{\alpha,\beta}$, it is clear that $\perp_{\rho}=\perp_{\rho_{\alpha,\beta}}$. Therefore $(ii)$ implies $(i)$.
\end{proof}
\begin{thm}
    Let $X$ be a nonempty set  endowed with two norms $\|.\|_1$ and $\|.\|_2$ over a field $\mathbb{R}$. $\|.\|_1$ and $\|.\|_2$ are norm equivalent if and only if there exists a positive constant $k$ such that, for all $u,v \in X$, 
    \begin{align*}
        |\rho_{\alpha,\beta,1}(u,v)-\rho_{\alpha,\beta,2}(u,v)|< k\min \{\|u\|_1\|v\|_1,\|u\|_2\|v\|_2\}
    \end{align*}
   where $\rho_{\alpha,\beta,c}$ is a functional $\rho_{\alpha,\beta}$ with respect to $\|.\|_c$, for $c=1,2$.
 \end{thm}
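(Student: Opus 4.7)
The argument naturally splits into two implications, each resting on Proposition~\ref{k}. For the forward direction I plan to combine the boundedness of $\rho_{\alpha,\beta}$ given in Proposition~\ref{k}(v) with the triangle inequality and the norm-equivalence constants to manufacture an explicit $k$. For the converse I intend to probe the assumed inequality with the specific choice $v=u$ and then extract a pointwise comparison between $\|\cdot\|_1$ and $\|\cdot\|_2$ using the identity $\rho_{\alpha,\beta}(u,u)=(\alpha+\beta)\|u\|^2$ from Proposition~\ref{k}(i).

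Suppose first that the two norms are equivalent, so there exist $m,M>0$ with $m\|x\|_1\le\|x\|_2\le M\|x\|_1$ for every $x\in X$. Proposition~\ref{k}(v) immediately gives $|\rho_{\alpha,\beta,c}(u,v)|\le(\alpha+\beta)\|u\|_c\|v\|_c$ for $c=1,2$, and the triangle inequality yields
\[
|\rho_{\alpha,\beta,1}(u,v)-\rho_{\alpha,\beta,2}(u,v)| \le (\alpha+\beta)\bigl(\|u\|_1\|v\|_1+\|u\|_2\|v\|_2\bigr).
\]
Using $\|u\|_2\|v\|_2\le M^2\|u\|_1\|v\|_1$ on the one hand, and $\|u\|_1\|v\|_1\le m^{-2}\|u\|_2\|v\|_2$ on the other, I obtain the two simultaneous upper bounds $(\alpha+\beta)(1+M^2)\|u\|_1\|v\|_1$ and $(\alpha+\beta)(1+m^{-2})\|u\|_2\|v\|_2$. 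Any $k$ strictly exceeding both of these constants then satisfies the required strict inequality against $\min\{\|u\|_1\|v\|_1,\|u\|_2\|v\|_2\}$ for all nonzero $u,v$.

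For the converse, assume such a $k>0$ exists and substitute $v=u$ with $u\ne 0$. By Proposition~\ref{k}(i) the hypothesis reduces to $(\alpha+\beta)\bigl|\|u\|_1^2-\|u\|_2^2\bigr|<k\min\{\|u\|_1^2,\|u\|_2^2\}$. Setting $K=\sqrt{1+k/(\alpha+\beta)}$ and splitting into cases according to which of $\|u\|_1,\|u\|_2$ is the larger, I can read off both $\|u\|_1<K\|u\|_2$ and $\|u\|_2<K\|u\|_1$, which is exactly the equivalence of the two norms. The only real wrinkle is the boundary behavior: when $u=0$ or $v=0$ both sides of the asserted inequality vanish, so a strict inequality is impossible and the statement must implicitly be restricted to nonzero vectors; beyond that, the main care needed is simply to keep \emph{both} equivalence constants in play so that the single constant $k$ dominates both factors of the $\min$ simultaneously.
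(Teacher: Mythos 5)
Your proposal is correct and takes essentially the same route as the paper's proof: Proposition~\ref{k}(v), the triangle inequality, and the equivalence constants produce the two simultaneous bounds $(1+M^2)\|u\|_1\|v\|_1$ and $(1+m^{-2})\|u\|_2\|v\|_2$ in the forward direction, and the substitution $v=u$ together with Proposition~\ref{k}(i) yields the equivalence constants in the converse. If anything you are slightly more careful than the paper, which silently drops the factor $(\alpha+\beta)$ when passing from $\rho_{\alpha,\beta,c}(u,u)$ to $\|u\|_c^2$ and does not remark on the degenerate case $u=0$ or $v=0$, where the strict inequality cannot hold.
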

    

\begin{proof}
Suppose that $\|.\|_1$ and $\|.\|_2$ are norm equivalent. Then, there exist positive numbers $a,b$ such that $m\|.\|_1 \leq\|.\|_2\leq M\|.\|_1$.
Using Proposition \ref{k} (v), we have \[|\rho_{\alpha,\beta,1}(u,v)|< \|u\|_1\|v\|_1,\] and \[|\rho_{\alpha,\beta,2}(u,v)|< \|u\|_2\|v\|_2.\]
Therefore,
\begin{align*}
   |\rho_{\alpha,\beta,1}(u,v)-\rho_{\alpha,\beta,2}(u,v)|&< (\|u\|_1\|v\|_1+\|u\|_2\|v\|_2)\\
   &\leq (\|u\|_1\|v\|_1+M^2\|u\|_1\|v\|_1) \\
   &= (1+M^2)\|u\|_1\|v\|_1.
\end{align*}
On the similar lines, we get
\begin{align*}
 |\rho_{\alpha,\beta,1}(u,v)-\rho_{\alpha,\beta,2}(u,v)|< (1+\frac{1}{m^2})\|u\|_2\|v\|_2.   
\end{align*}
Taking $k=\max\{(1+M^2), (1+\frac{1}{m^2})\},$ we have,
\begin{align*}
|\rho_{\alpha,\beta,1}(u,v)-\rho_{\alpha,\beta,2}(u,v)|< k\min \{\|u\|_1\|v\|_1,\|u\|_2\|v\|_2\}.
 \end{align*}
Conversely, suppose that for every $u \in X$ and $k>0$, we have 
\begin{align*}
 |\rho_{\alpha,\beta,1}(u,u)-\rho_{\alpha,\beta,2}(u,u)|< k\min \{\|u\|^2_1,\|u\|^2_2\}.   
\end{align*}
If $\min \{\|u\|^2_1,\|u\|^2_2\}=\|u\|^2_1$, then
\begin{align*}
 |\|u\|^2_1-\|u\|^2_2| < k\|u\|^2_1 < k\|u\|^2_2.
\end{align*}
Therefore, we get 
\begin{align*}
\|u\|_2 < \sqrt{1+k}\|u\|_1,~~ \|u\|_1 < \sqrt{1+k}\|u\|_2,
\end{align*}
and  
\begin{align*}
\frac{1}{\sqrt{1+k}}\|u\|_1 < \|u\|_2 < \sqrt{1+k}\|u\|_1.
\end{align*}
Similarly, we get the result, if $\min \{\|u\|^2_1,\|u\|^2_2\}=\|u\|^2_2$.
\end{proof}
In real inner product space X, it is evident that the equality,
\begin{align}\label{a28}
\|u+v\|^4-\|u-v\|^4=8(\|u\|^2\langle u,v\rangle+\|v\|^2\langle v,u\rangle)
\end{align}
holds, which is equivalent to the parallelogram equality
\begin{align*}
\|u+v\|^2-\|u-v\|^2=2(\|u\|^2+\|v\|^2).
\end{align*}
In normed spaces, the equality
\begin{align*}
(\alpha+\beta)(\|u+v\|^4-\|u-v\|^4)=8(\|u\|^2\rho_{\alpha,\beta}(u,v)+\|v\|^2\rho_{\alpha,\beta}(v,u))
\end{align*}
is a generalization of \eqref{a28}, for $\alpha,\beta\in[0,1)$ with $0<\alpha+\beta<1$.\\
In the following result, we provide a sufficient condition for the smoothness of a normed space. \\
Also we know that the function $\rho_\pm$ is continuous in the second variable (see \cite{alsina2010norm}, Proposition 2.1.3). Therefore, functional $\rho_{\alpha,\beta}$ is continuous.
\begin{thm}
Suppose that $(X, \|.\|)$ is a normed space  and
\begin{align}\label{a29}
(\alpha+\beta)(\|u+v\|^4-\|u-v\|^4)=8(\|u\|^2\rho_{\alpha,\beta}(u,v)+\|v\|^2\rho_{\alpha,\beta}(v,u)),
\end{align}
holds for $u, v\in X$, $\alpha,\beta \in (0,1)$ such that $\alpha+\beta<1$. Then $X$ is smooth.
\end{thm}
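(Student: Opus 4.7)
The plan is to reduce the quartic identity \eqref{a29} to a pointwise identity between the one-sided norm derivatives $\rho_-(u,v)$ and $\rho_+(u,v)$ via a scaling-and-limit argument, then invoke Lemma \ref{a} for smoothness.

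Fix $u, v \in X$ with $u \neq 0$, and substitute $v \mapsto tv$ in \eqref{a29} with $t > 0$. By Proposition \ref{k}(ii), $\rho_{\alpha,\beta}(u, tv) = t\rho_{\alpha,\beta}(u,v)$ and $\rho_{\alpha,\beta}(tv, u) = t\rho_{\alpha,\beta}(v,u)$, so the right-hand side becomes $8t\|u\|^2\rho_{\alpha,\beta}(u,v) + 8t^3\|v\|^2\rho_{\alpha,\beta}(v,u)$. Dividing through by $t$ and letting $t \to 0^+$, the cubic-order term vanishes, leaving $8\|u\|^2\rho_{\alpha,\beta}(u,v)$ on the right.

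The key computational step is evaluating the left-hand limit $\lim_{t \to 0^+} t^{-1}(\|u+tv\|^4 - \|u-tv\|^4)$. I would factor
\begin{equation*}
\|u+tv\|^4 - \|u-tv\|^4 = \bigl(\|u+tv\|^2 + \|u-tv\|^2\bigr)\bigl(\|u+tv\|^2 - \|u-tv\|^2\bigr),
\end{equation*}
observe that the first factor tends to $2\|u\|^2$ by continuity of the norm, and split the second as $(\|u+tv\|^2 - \|u\|^2) - (\|u-tv\|^2 - \|u\|^2)$. Dividing by $t$ and letting $t \to 0^+$, the definitions of the one-sided derivatives (with the substitution $s = -t$ for the second term) give $2\rho_+(u,v) - (-2\rho_-(u,v)) = 2(\rho_-(u,v) + \rho_+(u,v))$, so the full left-hand limit is $4\|u\|^2(\rho_-(u,v) + \rho_+(u,v))$.

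Equating both sides after the limit yields $(\alpha+\beta)(\rho_-(u,v) + \rho_+(u,v)) = 2\rho_{\alpha,\beta}(u,v) = 2\alpha\rho_-(u,v) + 2\beta\rho_+(u,v)$, which rearranges to $(\beta - \alpha)\bigl(\rho_-(u,v) - \rho_+(u,v)\bigr) = 0$. Selecting admissible $\alpha, \beta$ with $\alpha \neq \beta$ (the natural reading of the hypothesis, given that the same condition appears in the previous smoothness theorems) forces $\rho_-(u,v) = \rho_+(u,v)$ for every $u, v \in X$, the $u = 0$ case being trivial, and Lemma \ref{a} then gives that $X$ is smooth. The only delicate step I foresee is the sign bookkeeping in the difference quotient for $\|u-tv\|^2$; beyond that, the argument reduces cleanly to homogeneity of $\rho_{\alpha,\beta}$ in the second slot and the defining limits of $\rho_\pm$.
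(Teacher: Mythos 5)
Your scaling computation is sound as far as it goes: substituting $v\mapsto tv$ ($t>0$), using Proposition \ref{k}(ii), dividing by $t$, factoring the quartic difference and passing to the limit does give $(\alpha+\beta)\bigl(\rho_-(u,v)+\rho_+(u,v)\bigr)=2\rho_{\alpha,\beta}(u,v)$, i.e. $(\alpha-\beta)\bigl(\rho_+(u,v)-\rho_-(u,v)\bigr)=0$, and the sign bookkeeping for the $\|u-tv\|^2$ quotient is correct. The genuine gap is the final step. Because you compare $\|u+tv\|$ and $\|u-tv\|$ symmetrically about $u$, the relation you extract is vacuous exactly when $\alpha=\beta$, and that case is permitted by the hypothesis: the theorem is stated for a fixed pair $\alpha,\beta\in(0,1)$ with $\alpha+\beta<1$, with no assumption $\alpha\neq\beta$. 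Your justification for imposing $\alpha\neq\beta$ (``the same condition appears in the previous smoothness theorems'') in fact cuts the other way: those theorems state $\alpha\neq\beta$ explicitly while this one deliberately omits it, and when $\alpha=\beta$ the identity \eqref{a29} reduces to the corresponding identity for the Mili\'ci\'c functional $\rho=(\rho_-+\rho_+)/2$ (the analogue of $\lambda=\tfrac12$ in the Zamani--Moslehian setting), for which smoothness is still claimed. So your argument proves only the special case $\alpha\neq\beta$, not the stated theorem.

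The paper's proof avoids this degeneracy by breaking the $\pm$ symmetry rather than exploiting it. It first deduces from \eqref{a29} and Proposition \ref{k}(iv) the auxiliary limit $\lim_{t\to0^+}\rho_{\alpha,\beta}\bigl(u+\frac{t}{2}v,v\bigr)=\rho_{\alpha,\beta}(u,v)$, and then, instead of $\|u+tv\|^4-\|u-tv\|^4$, it writes
\begin{equation*}
\|u+tv\|^4-\|u\|^4=\Bigl\|\Bigl(u+\frac{t}{2}v\Bigr)+\frac{t}{2}v\Bigr\|^4-\Bigl\|\Bigl(u+\frac{t}{2}v\Bigr)-\frac{t}{2}v\Bigr\|^4,
\end{equation*}
applying \eqref{a29} at the shifted centre $u+\frac{t}{2}v$. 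This one-sided use of the identity yields $\rho_+(u,v)=\rho_{\alpha,\beta}(u,v)/(\alpha+\beta)$, hence $(\alpha+\beta)\rho_+(u,v)=\alpha\rho_-(u,v)+\beta\rho_+(u,v)$, i.e. $\alpha\bigl(\rho_+(u,v)-\rho_-(u,v)\bigr)=0$; since $\alpha>0$ this forces $\rho_+=\rho_-$ for every admissible pair, including $\alpha=\beta$, and Lemma \ref{a} concludes. Your elementary factorization could be salvaged by performing it about the recentred point $u+\frac{t}{2}v$ instead of $u$, but then you also need the auxiliary limit above (continuity of $\rho_{\alpha,\beta}$ in the first argument along that ray is not among the listed properties and must be derived from \eqref{a29} itself), which is precisely the extra ingredient of the paper's argument.
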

\begin{proof}
For $u, v\in X$, $\alpha,\beta \in (0,1)$ such that $\alpha+\beta<1$, consider
\begin{align*}
(\alpha+\beta)&(\|u+v\|^4-\|u-v\|^4)\\
&=(\alpha+\beta) \lim_{t \to 0^+} \left(\|(u+\frac{t}{2}v)+v\|^4-\|(u+\frac{t}{2}v)-v\|^4\right)\\
&= 8 \lim_{t \to 0^+}\left[\|u+\frac{t}{2}v\|^2\rho_{\alpha,\beta}\left(u+\frac{t}{2}v,v\right)+\|v\|^2\rho_{\alpha,\beta}\left(v,u+\frac{t}{2}v\right)\right]\\
&~~~~~\hspace{7cm}[\text{Using \eqref{a29}}]\\
&= 8 \lim_{t \to 0^+} \left[\|u+\frac{t}{2}v\|^2\rho_{\alpha,\beta}\left(u+\frac{t}{2}v,v\right)+\|v\|^2\left(\frac{t}{2}(\alpha+\beta)\|v\|^2+\rho_{\alpha,\beta}(v,u)\right)\right]\\
&~~~~~\hspace{6.5cm}[\text{Using proposition \ref{k}(iv)}]\\
&= 8 \left[\|u\|^2\lim_{t \to 0^+}\rho_{\alpha,\beta}\left(u+\frac{t}{2}v,v\right)+\|v\|^2\rho_{\alpha,\beta}(v,u)\right].
\end{align*}
Therefore, by using \eqref{a29}, we have
\begin{align}\label{a30}
\lim_{t \to 0^+}\rho_{\alpha,\beta}\left(u+\frac{t}{2}v,v\right)=\rho_{\alpha,\beta}(u,v).    
\end{align}
For $u, v\in X$, consider
\begin{align*}
 \rho_{+}&(u,v)=\lim_{t \to 0^+}\frac{\|u+tv\|^2-\|u\|^2}{2t}\\[0.2cm] 
 &=\lim_{t \to 0^+}\frac{\|u+tv\|^4-\|u\|^4}{2t(\|(u+tv\|^2+\|u\|^2)}\\[0.2cm]
&=\lim_{t \to 0^+}\frac{\|(u+\frac{t}{2}v)+\frac{t}{2}v\|^4-\|(u-\frac{t}{2}v)+\frac{t}{2}v\|^4}{2t(\|(u+tv\|^2+\|u\|^2)}\\[0.2cm]
&=\lim_{t \to 0^+}\frac{8\left(\|u+\frac{t}{2}v\|^2\rho_{\alpha,\beta}\left(u+\frac{t}{2}v,\frac{t}{2}v\right)+\|\frac{t}{2}v\|^2\rho_{\alpha,\beta}\left(\frac{t}{2}v,u+\frac{t}{2}v\right)\right)}{2t(\|(u+tv\|^2+\|u\|^2)(\alpha+\beta)}\\[0.2cm]
&~~~~~\hspace{6.5cm}[\text{Using \eqref{a29} and \eqref{a30}}]\\
&=\lim_{t \to 0^+}\frac{4t\left(\|u+\frac{t}{2}v\|^2\rho_{\alpha,\beta}\left(u+\frac{t}{2}v,v\right)+\|\frac{t}{2}v\|^2\rho_{\alpha,\beta}\left(v,u+\frac{t}{2}v\right)\right)}{2t(\|(u+tv\|^2+\|u\|^2)(\alpha+\beta)}\\[0.2cm]
&~~~~~\hspace{6cm}[\text{Using proposition \ref{k}(iii)}]\\
&=2\lim_{t \to 0^+}\frac{\|u+\frac{t}{2}v\|^2\rho_{\alpha,\beta}\left(u+\frac{t}{2}v,v\right)+\frac{t^3}{8}\|v\|^4+\frac{t^2}{4}\|v\|\rho_{\alpha,\beta}\left(v,u\right)}{(\|(u+tv\|^2+\|u\|^2)(\alpha+\beta)}\\[0.2cm]
&~~~~~\hspace{6cm}[\text{Using proposition \ref{k}(iv)}]\\
&=2\times\frac{\|u\|^2 \rho_{\alpha,\beta}(u,v)+0}{2\|u\|^2(\alpha+\beta)}\\
&=\frac{\rho_{\alpha,\beta}(u,v)}{(\alpha+\beta)}.
\end{align*}
It implies that $\rho_{+}(u,v)=\rho_{-}(u,v)$. Hence, $X$ is smooth. 
\end{proof}


This section concludes with another characterization of inner product spaces if $\rho_{\alpha,\beta}$ is symmetrical.
\begin{thm}
 $\rho_{\alpha,\beta}(u,v)=\rho_{\alpha,\beta}(v,u)$ if and only if the norm in $X$ comes from a real inner product. 
\end{thm}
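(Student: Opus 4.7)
The backward direction is immediate: if the norm on $X$ arises from an inner product $\langle \cdot,\cdot\rangle$, then a direct computation of the one-sided limits gives $\rho_{-}(u,v)=\rho_{+}(u,v)=\langle u,v\rangle$ for every $u,v\in X$, so $\rho_{\alpha,\beta}(u,v)=(\alpha+\beta)\langle u,v\rangle$ is manifestly symmetric in its two arguments.

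For the forward direction, my plan is to reduce the assumed symmetry of $\rho_{\alpha,\beta}$ to the symmetry of the Mili\'ci\'c functional $\rho$, and then invoke the classical characterization of inner product spaces via $\rho$. The key observation is that the negative-homogeneity identity $\rho_{\pm}(u,-v)=-\rho_{\mp}(u,v)$ from property (iii) of $\rho_{\pm}$ swaps $\rho_{-}$ and $\rho_{+}$, and hence interchanges the coefficients $\alpha$ and $\beta$:
\begin{align*}
\rho_{\alpha,\beta}(u,-v)=\alpha\rho_{-}(u,-v)+\beta\rho_{+}(u,-v)=-\alpha\rho_{+}(u,v)-\beta\rho_{-}(u,v)=-\rho_{\beta,\alpha}(u,v),
\end{align*}
and analogously $\rho_{\alpha,\beta}(-v,u)=-\rho_{\beta,\alpha}(v,u)$. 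Applying the hypothesis to the pair $(u,-v)$ then yields $\rho_{\beta,\alpha}(u,v)=\rho_{\beta,\alpha}(v,u)$, so $\rho_{\beta,\alpha}$ is symmetric as well. Summing the two symmetries and using $\rho_{-}+\rho_{+}=2\rho$,
\begin{align*}
2(\alpha+\beta)\rho(u,v)=\rho_{\alpha,\beta}(u,v)+\rho_{\beta,\alpha}(u,v)=\rho_{\alpha,\beta}(v,u)+\rho_{\beta,\alpha}(v,u)=2(\alpha+\beta)\rho(v,u),
\end{align*}
and since $\alpha+\beta>0$ we conclude $\rho(u,v)=\rho(v,u)$ for all $u,v\in X$.

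The concluding step is to invoke the classical result (see \cite{amir1986characterizations,milicic1987g}) that symmetry of $\rho$ forces the norm to arise from an inner product, which finishes the proof. This final step is the main obstacle of the argument: the algebraic reduction above is essentially costless, but bridging from $\rho$-symmetry to a genuine inner product is a substantive theorem rather than a manipulation. If a self-contained treatment is preferred, one reduces to an arbitrary two-dimensional subspace via Theorem \ref{oo} and uses properties (iv) and (v) of $\rho_{\pm}$ together with the symmetry of $\rho$ to extract the parallelogram identity, after which the Jordan--von Neumann theorem completes the argument.
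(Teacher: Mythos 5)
Your argument is correct, and its first half coincides with the paper's: both derive $\rho_{\beta,\alpha}(u,v)=\rho_{\beta,\alpha}(v,u)$ from the hypothesis via the sign-flip identity $\rho_{\alpha,\beta}(u,-v)=-\rho_{\beta,\alpha}(u,v)$, and both then pass to the sum $\rho_{\alpha,\beta}+\rho_{\beta,\alpha}=2(\alpha+\beta)\rho$, i.e.\ to the symmetry of the Mili\'ci\'c functional $\rho$. The divergence is in how that symmetry is converted into an inner product. The paper does it from scratch: on an arbitrary two-dimensional subspace $P$ it defines $\langle u,v\rangle=\frac{\rho_{\alpha,\beta}(u,v)+\rho_{\beta,\alpha}(u,v)}{2(\alpha+\beta)}$ (which is exactly $\rho(u,v)$) and verifies the inner-product axioms directly --- positivity and homogeneity from Proposition \ref{k}, symmetry from the two symmetry relations, and additivity in the second variable by the case analysis $v=tu$ versus $w=s_1u+s_2v$ --- before applying Theorem \ref{oo}. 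You instead delegate this, the main content of the paper's proof, to the classical characterization ``$\rho$ symmetric $\Rightarrow$ inner product space.'' That result is indeed available in the literature (it is the case $\lambda=\tfrac12$ of the symmetry theorem for $\rho_\lambda$ in \cite{zamani2019extension}; it does not appear in this exact form in \cite{amir1986characterizations} or \cite{milicic1987g}, so the attribution should be sharpened), so your proof is complete modulo that citation; what the paper buys by not citing it is self-containedness, at the cost of the axiom-by-axiom verification. Your sketched backup route (extract the parallelogram identity and invoke Jordan--von Neumann) would also work but is only indicated, and note that the paper's verification never passes through the parallelogram law: it checks bilinearity of $\rho$ directly on two-dimensional subspaces, which is the shorter path once Theorem \ref{oo} is in hand.
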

\begin{proof}
Let the norm in $X$ come from a real inner product space. Then $\rho_{-}(u,v)=\langle u,v\rangle=\rho_{-}(u,v)$. Therefore, using the definition of $\rho_{\alpha,\beta}$, we have 
\[\rho_{\alpha,\beta}(u,v)=(\alpha+\beta)\langle u,v\rangle.\] By the property of real inner product $\langle u,v\rangle=\langle v,u\rangle$, we have  \[\rho_{\alpha,\beta}(u,v)=\rho_{\alpha,\beta}(v,u).\]

Conversely, let  $\rho_{\alpha,\beta}(u,v)=\rho_{\alpha,\beta}(v,u)$.
This condition implies that $\rho_{\beta,\alpha}(u,v)=\rho_{\beta,\alpha}(v,u)$ for all $u,v \in X$.
Using Lemma \ref{k}(ii), we have
$\rho_{\beta,\alpha}(u,v)=-\rho_{\alpha,\beta}(-u,v)=\rho_{\alpha,\beta}(v,-u)=\rho_{\beta,\alpha}(v,u)$.\\
Now let $P$ be any two-dimensional subspace of $X$. Define mapping $\langle .,. \rangle:P\times P\rightarrow \mathbb{R}$ by 
\begin{align*}
    \langle u,v\rangle = \frac{\rho_{\alpha,\beta}(u,v)+\rho_{\beta,\alpha}(u,v)}{2(\alpha+\beta)}.
\end{align*}
We demonstrate that ${\langle .,. \rangle}$ is a inner product in $P$.\\
(i) Using Proposition \ref{k}(i), we have  $\rho_{\alpha,\beta}(u,u)=(\alpha
+\beta)\|u\|^2\geq 0$. Also $\rho_{\beta,\alpha}(u,u)\geq 0$. Therefore $\langle u,u \rangle  \geq 0$. Also when $u=0$, we have $\langle u,v\rangle=0$.\\
(ii) By the condition $\rho_{\alpha,\beta}(u,v)=\rho_{\alpha,\beta}(v,u)$ and $\rho_{\beta,\alpha}(u,v)=\rho_{\beta,\alpha}(v,u)$. Therefore $\langle u,v \rangle=\langle v,u \rangle$.\\
(iii) For $\alpha\geq0$ we have
\begin{align*}
     \langle \alpha u,v \rangle &= \frac{\rho_{\alpha,\beta}(\alpha u,v)+\rho_{\beta,\alpha}(\alpha u,v)}{2(\alpha+\beta)}\\
     &= \frac{\alpha\rho_{\alpha,\beta}(u,v)+\alpha\rho_{\beta,\alpha}(u,v)}{2(\alpha+\beta)}\\
     &= \alpha \langle u,v \rangle .
\end{align*}
 For $\alpha<0$ we have
\begin{align*}
     \langle \alpha u,v \rangle &= \frac{\rho_{\alpha,\beta}(\alpha u,v)+\rho_{\beta,\alpha}(\alpha u,v)}{2(\alpha+\beta)}\\
     &= \frac{\alpha\rho_{\beta,\alpha}(u,v)+\alpha\rho_{\alpha,\beta}(u,v)}{2(\alpha+\beta)}\\
     &= \alpha \langle u,v \rangle .
\end{align*}
(iv) It is sufficient to demonstrate the additivity with regard to the second variable. Take $u,v,w \in X$.\\
Case 1: $u$ and $v$ are linearly dependent that is $v=tu$ for some $t \in \mathbb{R}$. Now,
\begin{align*}
    \langle u,v+w \rangle &=\langle u,tu+w\rangle\\
    &= \frac{\rho_{\alpha,\beta}(u,tu+w)+\rho_{\beta,\alpha}(u,tu+w)}{2(\alpha+\beta)}\\
    &= \frac{t(\alpha+\beta)\|u\|^2+\rho_{\alpha,\beta}(u,w)+t(\alpha+\beta)\|u\|^2+\rho_{\beta,\alpha}(u,w)}{2(\alpha+\beta)}\\
    &=t\|u\|^2+\frac{\rho_{\alpha,\beta}(u,w)+\rho_{\beta,\alpha}(u,w)}{2(\alpha+\beta)}\\
    &=\langle u,tu \rangle +\langle u,w \rangle\\
    &=\langle u,v \rangle +\langle u,w \rangle.
\end{align*}
Case 2: $u$ and $v$ are linearly independent that is $w=s_1u+s_2v$ for some $s_1, s_2 \in \mathbb{R}$. Now,
\begin{align*}
    \langle u,v+w \rangle &=\langle u,s_1u+(1+s_2)v\rangle\\
    &= \frac{\rho_{\alpha,\beta}(u,s_1u+(1+s_2)v)+\rho_{\beta,\alpha}(u,s_1u+(1+s_2)v)}{2(\alpha+\beta)}\\
    &= \frac{2s_1(\alpha+\beta)\|u\|^2+\rho_{\alpha,\beta}(u,(1+s_2)v)+\rho_{\beta,\alpha}(u,(1+s_2)v)}{2(\alpha+\beta)}\\
    &=s_1\|u\|^2+\frac{\rho_{\alpha,\beta}(u,(1+s_2)v)+\rho_{\beta,\alpha}(u,(1+s_2)v)}{2(\alpha+\beta)}\\
    &=\langle u,s_1u \rangle +\langle u,(1+s_2)v \rangle\\
    &=\langle u,s_1u \rangle +(1+s_2)\langle u,v \rangle\\
    &=\langle u,s_1u \rangle+\langle u,v \rangle+ \langle u,s_2v \rangle\\
    &= \langle u,v \rangle+\langle u,s_1u+s_2v \rangle\\
    &= \langle u,v \rangle+\langle u,w\rangle.
\end{align*}
Thus $\langle .,.\rangle$ is an inner product in $P$. Using the Theorem \ref{oo}, we get the norm in $X$ comes from an inner
product.
\end{proof}

\subsection{Linear mappings preserving $\rho_{\alpha,\beta}$-orthogonality}

 Many researchers have examined the linear preserver problems, that is, to determine the structure of linear mappings between normed spaces that preserve linear orthogonality.

We know that a support functional $F_u$ at $u \in X$ is a norm-one linear functional
in $X^*$ such that $F_u(u) =\|u\|$. The Hahn-Banach theorem states that for every $u\in X$, there is always at least one such functional. If there is a distinct support functional at $u$, a normed space $X$ is said to be smooth at $u \in X\backslash \{0\}$.
It is important to note that the norm $\|.\|$ is said to be Gateaux differential at $u \in X$ if the
limit $f_u(v)=\lim_{t \to 0}\frac{\|u+tv\|-\|u\|}{t}$
exists for all $v\in X$. The Gateaux differential at $u$ of the norm $\|.\|$ is denoted as $f_u$. Also, $f_u$ is a bounded real linear functional on $X$. When $u$ is a smooth point, it is evident that $\rho_+(u,v)=\rho_-(u,v)=\|u\|f_u(v)$ for $v\in X$.
Hence, the smoothness of $X$ at $u$ is equivalent to the Gateaux differentiability of the norm at $u$.

We quote the following well known result for further reference. 

\begin{lem}\label{h}(see \cite{blanco2006maps})
	Every norm on $\mathbb{R}^n$ is Gateaux differentiable $\mu^n$-a.e. on $\mathbb{R}^n$, where $\mu^n$ is a Lebesgue measure on $\mathbb{R}^n$.
\end{lem}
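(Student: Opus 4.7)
The plan is to deduce this almost-everywhere Gateaux differentiability from the classical fact that Lipschitz functions on $\mathbb{R}^n$ are Frechet differentiable almost everywhere, combined with the observation that every norm is itself a Lipschitz convex function.

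First I would observe that any norm $\|\cdot\|$ on $\mathbb{R}^n$ is convex: the triangle inequality combined with absolute homogeneity yields $\|tu+(1-t)v\| \leq t\|u\| + (1-t)\|v\|$ for all $t \in [0,1]$ and $u,v \in \mathbb{R}^n$. Moreover, the reverse triangle inequality gives $\bigl|\|u\| - \|v\|\bigr| \leq \|u-v\|$, so $\|\cdot\|$ is $1$-Lipschitz with respect to itself. Because all norms on $\mathbb{R}^n$ are pairwise equivalent, $\|\cdot\|$ is then globally Lipschitz with respect to the standard Euclidean norm on $\mathbb{R}^n$, hence it is a Lipschitz real-valued function on $\mathbb{R}^n$.

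Next I would invoke Rademacher's theorem, which states that every Lipschitz function $f:\mathbb{R}^n \to \mathbb{R}$ is Frechet differentiable at $\mu^n$-almost every point of $\mathbb{R}^n$. Applied to the function $f(u)=\|u\|$, this produces a $\mu^n$-null set $N \subset \mathbb{R}^n$ such that $\|\cdot\|$ is Frechet differentiable on $\mathbb{R}^n \setminus N$. Since Frechet differentiability at a point implies Gateaux differentiability there, the two one-sided limits $\lim_{t \to 0^{\pm}} t^{-1}(\|u+tv\|-\|u\|)$ coincide for every $v$ whenever $u \notin N$; equivalently, $\rho_{-}(u,v)=\rho_{+}(u,v)$ for $\mu^n$-almost every $u \in \mathbb{R}^n$, which is the desired conclusion.

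The main obstacle here is the invocation of Rademacher's theorem, which is itself a nontrivial measure-theoretic result whose standard proof relies on Vitali-type covering theorems and Fubini's theorem. However, because the norm is additionally convex, one can give a more self-contained argument in place of appealing to Rademacher: the subdifferential $\partial \|\cdot\|$ is a monotone multifunction, the one-sided directional derivatives always exist, and an elementary slicing argument (looking at the one-dimensional convex functions $t \mapsto \|u+te_i\|$ along each coordinate axis, which are differentiable off a countable set, combined with Fubini) shows that the set of non-Gateaux points has Lebesgue measure zero.
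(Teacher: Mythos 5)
Your argument is correct. Note that the paper itself offers no proof of this lemma: it is quoted as a known result from Blanco and Turn{\v{s}}ek \cite{blanco2006maps}, so there is no internal argument to compare against, and what you have supplied is precisely the standard justification. The Rademacher route is sound as written: a norm on $\mathbb{R}^n$ is $1$-Lipschitz with respect to itself by the reverse triangle inequality, hence Lipschitz for the Euclidean norm by equivalence of norms in finite dimensions; Rademacher's theorem then gives Fr\'echet differentiability off a $\mu^n$-null set, and Fr\'echet differentiability implies the Gateaux differentiability used in the paper, i.e.\ $\rho_{-}(u,v)=\rho_{+}(u,v)$ for almost every $u$ and every $v$. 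Your convexity-based alternative is also viable and arguably more economical, since it avoids the full strength of Rademacher, but as sketched it glosses one step: Fubini together with the one-dimensional fact (a convex function of one real variable is differentiable off a countable set) only shows that the set where some \emph{partial} derivative of the norm fails to exist is $\mu^n$-null, and for a general Lipschitz function existence of all partial derivatives at a point does not imply Gateaux differentiability there. What rescues the argument is the classical theorem that for a \emph{convex} function on $\mathbb{R}^n$ the existence of the $n$ partial derivatives at a point already forces (even Fr\'echet) differentiability at that point; citing or proving that fact is needed to make the self-contained version complete.
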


The following result of Blanco and Turn{\v{s}}ek \cite{blanco2006maps} will be used in the sequel. 
\begin{lem}\label{j}
    Let $\|.\|$ be any norm on $\mathbb{K}^2$, where $\mathbb{K}$ is real or complex field and let $D \subseteq \mathbb{K}^2$ be a set of all non-smooth points. If $\mu^{2 \dim \mathbb{K}}(D)=0$, then there exist a path $g:[0,2]\rightarrow \mathbb{K}^2$ of the form:
    \begin{align*}
        g(s):=
        \begin{cases}
            &(1,s\xi), s \in [0,1]\\
            &(1,(2-s)\xi+(s-1)), s\in [1,2]\\
        \end{cases}
    \end{align*}
    for some $\xi \in \mathbb{K}$, such that $\mu\{s:g(s)\in D\}=0$.
\end{lem}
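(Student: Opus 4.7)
The plan is a Fubini-type argument leveraging the $1$-homogeneity of the norm. Because $\|\lambda z\| = |\lambda|\,\|z\|$ for every $\lambda \in \mathbb{K}\setminus\{0\}$, smoothness at $z$ is equivalent to smoothness at $\lambda z$, so the non-smooth set $D \subseteq \mathbb{K}^{2}$ is invariant under the scaling action of $\mathbb{K}^{\ast}$. First I would reduce the hypothesis to a null statement about the cross-section $D'' := \{y\in\mathbb{K}:(1,y)\in D\}$. Applying the change of variables $\Psi(c,w)=(c,cw)$ from $\mathbb{K}^{\ast}\times\mathbb{K}$ onto $\mathbb{K}^{\ast}\times\mathbb{K}$, whose real Jacobian is $|c|^{\dim\mathbb{K}}$, together with the $\mathbb{K}^{\ast}$-invariance of $D$ yields
\[
\mu^{2\dim\mathbb{K}}\bigl(D \cap (B\times\mathbb{K})\bigr) \;=\; \mu^{\dim\mathbb{K}}(D'')\int_{B}|c|^{\dim\mathbb{K}}\,dc
\]
for any bounded Borel $B\subseteq\mathbb{K}^{\ast}$. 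The left side vanishes by hypothesis, so taking $B$ of positive measure and bounded away from $0$ forces $\mu^{\dim\mathbb{K}}(D'')=0$.

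Next I would notice that for every $\xi$ the path $g_\xi$ lies in the affine line $\{1\}\times\mathbb{K}$, so the condition $g_\xi(s)\in D$ amounts to the second coordinate $h_\xi(s)$ lying in $D''$. The first segment gives $h_\xi(s)=s\xi$ on $[0,1]$, and the second gives $h_\xi(s)=(2-s)\xi+(s-1)$ on $[1,2]$. I would treat each via a separate Fubini. For the first, consider
\[
E_{1}=\{(\xi,s)\in\mathbb{K}\times[0,1]:s\xi\in D''\}.
\]
For every fixed $s\in(0,1]$ the $\xi$-slice is the dilate $s^{-1}D''$, of measure $s^{-\dim\mathbb{K}}\mu^{\dim\mathbb{K}}(D'')=0$, so $E_{1}$ is null in the product; a second Fubini then gives $\mu^{1}\{s\in[0,1]:s\xi\in D''\}=0$ for almost every $\xi\in\mathbb{K}$.

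An analogous argument via the substitution $t=s-1\in[0,1]$ handles the second piece: for each $t\in[0,1)$ the map $\xi\mapsto t+(1-t)\xi$ is an affine bijection of $\mathbb{K}$ with nonzero Jacobian $|1-t|^{\dim\mathbb{K}}$, so the corresponding $\xi$-slice is again null and a.e.\ $\xi$ works for the second segment as well. Intersecting the two co-null sets of admissible $\xi$ yields a set of full measure; any $\xi$ from it produces a path $g$ with $\mu\{s\in[0,2]:g(s)\in D\}=0$, as required. The most delicate step is the initial reduction: one has to verify that non-smoothness really is a $\mathbb{K}^{\ast}$-equivariant notion in both the real and complex settings and that $|c|^{\dim\mathbb{K}}$ is the correct real Jacobian in both cases; after that, the remaining Fubini and dilation manipulations are routine.
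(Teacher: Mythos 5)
The paper never proves this lemma at all --- it is quoted verbatim from Blanco and Turn\v{s}ek \cite{blanco2006maps} --- so there is no in-paper argument to compare you against; I can only judge your proof on its own terms, and it is correct. The one genuinely non-routine point is exactly the one you flag: since the whole path lies in the affine line $\{1\}\times\mathbb{K}$, the hypothesis $\mu^{2\dim\mathbb{K}}(D)=0$ by itself is useless (a planar null set can contain an entire line), and you must use that $D$, being the non-smoothness set of a norm (and, in the application inside Theorem \ref{o}, a union of two such sets, still a cone), is invariant under multiplication by $\mathbb{K}^{*}$; this follows from $\|\lambda z+tv\|=|\lambda|\,\|z+t\lambda^{-1}v\|$, which shows Gateaux differentiability at $z$ and at $\lambda z$ are equivalent in both the real and complex cases. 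Granting that, your change-of-variables computation with $\Psi(c,w)=(c,cw)$ and real Jacobian $|c|^{\dim\mathbb{K}}$ correctly forces $\mu^{\dim\mathbb{K}}(D'')=0$ for the cross-section $D''=\{y:(1,y)\in D\}$, and the two Fubini/dilation arguments for the segments $s\mapsto s\xi$ and $t\mapsto t+(1-t)\xi$ are sound (the exceptional parameters $s=0$ and $t=1$ are single points, hence harmless in the slice integrals); intersecting the two co-null sets of admissible $\xi$ even yields the conclusion for almost every $\xi$, which is more than the ``some $\xi$'' required. The only items I would ask you to write out explicitly are the scaling-equivariance verification you postponed and a sentence on measurability of $D''$, $E_{1}$, $E_{2}$ (all Borel, since the non-differentiability set of a convex function is Borel); both are routine and do not affect the validity of the argument.
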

\begin{lem} \label{pp}
 If a real normed space $X$ is smooth at
$u\in X \backslash \{0\}$ and $F_u$ is the unique support functional at $u$, then the following
are equivalent, for every $v \in X$.\\
(i) $\rho_{\alpha,\beta}(u,v)=0$,\\
(ii) $v\in ker F_u$.
\end{lem}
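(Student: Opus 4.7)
The plan is to reduce the equivalence to the single identity
\[\rho_{\alpha,\beta}(u,v) \;=\; (\alpha+\beta)\,\|u\|\, F_u(v) \qquad \text{for every } v\in X,\]
which is valid under the smoothness hypothesis at $u$. Once this identity is in hand the lemma is immediate, since the scalar $(\alpha+\beta)\|u\|$ is strictly positive (because $u\ne 0$ and $0<\alpha+\beta<1$), and therefore $\rho_{\alpha,\beta}(u,v)=0$ if and only if $F_u(v)=0$, i.e.\ $v\in \ker F_u$.

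To set up the identity, I would invoke the remark made in the excerpt just before Lemma \ref{h}: smoothness at $u$ makes the norm Gateaux differentiable at $u$, and the norm derivatives collapse to
\[\rho_+(u,v)=\rho_-(u,v)=\|u\|\,f_u(v),\]
where $f_u$ is the Gateaux differential of the norm at $u$. The key step is then to identify $f_u$ with the unique support functional $F_u$ of the hypothesis. This is standard: $f_u$ is a bounded real linear functional on $X$; a direct limit computation with $v=u$ gives $f_u(u)=\|u\|$; and from $|\|u+tv\|-\|u\||\le |t|\,\|v\|$ one obtains $|f_u(v)|\le\|v\|$, so $\|f_u\|\le 1$ and hence $f_u$ is itself a norm-one support functional at $u$. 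Uniqueness of $F_u$ then forces $f_u=F_u$, so that $\rho_\pm(u,v)=\|u\|\,F_u(v)$. Substituting into the definition $\rho_{\alpha,\beta}(u,v)=\alpha\rho_-(u,v)+\beta\rho_+(u,v)$ yields the claimed identity.

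There is no serious obstacle here. The only point to be careful about is the smooth-point identification of the Gateaux differential with the unique support functional; this is completely standard, and once recorded the two implications of the lemma are each one line of scalar arithmetic.
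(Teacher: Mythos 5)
Your proposal is correct and follows essentially the same route as the paper: both rest on the smooth-point identity $\rho_+(u,v)=\rho_-(u,v)=\|u\|F_u(v)$, so that $\rho_{\alpha,\beta}(u,v)=(\alpha+\beta)\|u\|F_u(v)$ and the equivalence reduces to the strict positivity of $(\alpha+\beta)\|u\|$. The only difference is that you spell out the identification of the Gateaux differential $f_u$ with the unique support functional $F_u$, which the paper simply quotes as known.
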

\begin{proof}
Here $X$ is smooth, so there exists unique support functional $F_u$ such that $\rho_+(u,v)=\rho_-(u,v)=\|x\|F_u(v)$. \\
Let us assume that $(ii)$ holds. Since $v\in ker F_u$, $\rho_+(u,v)=\rho_-(u,v)=0$. Therefore $\rho_{\alpha,\beta}(u,v)=0$. \\
Suppose that (i) holds. Then $\rho_{\alpha,\beta}(u,v)=(\alpha+\beta)F_u(y)=0$. This implies that $v\in ker F_u$.
\end{proof}
Now, we are in position to formulate main result of the section.

\begin{thm} \label{o}
Let $X$ and $\mathcal{Y}$ be normed spaces and let $\mathcal{T}:X \rightarrow \mathcal{Y}$ be a non-zero bounded linear operator. Then the following conditions are equivalent:
\begin{itemize}
\item[(i)] $\mathcal{T}$ preserves  $\rho_{\alpha,\beta}$-orthogonality, that is, if $u \perp_{\rho_{\alpha,\beta}} v$ then $\mathcal{T}u \perp_{\rho_{\alpha,\beta}} \mathcal{T}v$,
\item[(ii)] $\|\mathcal{T}u\|=\|\mathcal{T}\|\|u\|$, for all $u \in X$,
\item[(iii)] $\rho_{\alpha,\beta}(\mathcal{T}u,\mathcal{T}v)=\|\mathcal{T}\|^2\rho_{\alpha\beta}(u,v)$, for all $u,v \in X$.
\end{itemize}

\end{thm}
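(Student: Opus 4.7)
The plan is to establish the three equivalences cyclically via (ii)$\Rightarrow$(iii)$\Rightarrow$(i)$\Rightarrow$(ii); the first two implications are direct calculations, while (i)$\Rightarrow$(ii) carries the essential content of the theorem and will be the main obstacle.

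For (iii)$\Rightarrow$(i), I simply observe that $u\perp_{\rho_{\alpha,\beta}}v$ gives $\rho_{\alpha,\beta}(\mathcal{T}u,\mathcal{T}v)=\|\mathcal{T}\|^{2}\rho_{\alpha,\beta}(u,v)=0$. For (ii)$\Rightarrow$(iii), substituting $\|\mathcal{T}(u+tv)\|=\|\mathcal{T}\|\,\|u+tv\|$ into the limiting definition of $\rho_{\pm}$ gives $\rho_{\pm}(\mathcal{T}u,\mathcal{T}v)=\|\mathcal{T}\|^{2}\rho_{\pm}(u,v)$, and taking the $(\alpha,\beta)$-combination yields (iii).

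To prove (i)$\Rightarrow$(ii), I would adapt the Blanco--Turn{\v{s}}ek strategy. After replacing $\mathcal{T}$ by $\mathcal{T}/\|\mathcal{T}\|$, it suffices to show that $\|\mathcal{T}u\|=\|u\|$ for all $u\in X$, and since any two nonzero vectors lie in a common two-dimensional subspace this reduces to the case $\dim X=2$. Fix such a subspace; by Lemma \ref{h} the set of non-smooth directions of $X$ is Lebesgue-null, as is the preimage under $\mathcal{T}$ of the non-smooth directions of $\mathcal{Y}$. At a point $u$ that is jointly smooth for $X$ and $\mathcal{Y}$ via $\mathcal{T}$, Lemma \ref{pp} identifies the $\rho_{\alpha,\beta}$-orthogonal complements of $u$ and $\mathcal{T}u$ with $\ker F_{u}$ and $\ker F_{\mathcal{T}u}$ respectively, so hypothesis (i) forces $\ker F_{u}\subseteq\ker(F_{\mathcal{T}u}\circ\mathcal{T})$. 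Since these are kernels of linear functionals on a two-dimensional space, I get $F_{\mathcal{T}u}\circ\mathcal{T}=c_{u}F_{u}$ for a scalar $c_{u}$, and evaluating at $u$ yields $c_{u}=\|\mathcal{T}u\|/\|u\|$.

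The principal obstacle is then to show that $c_{u}$ does not depend on $u$. To do so I would invoke Lemma \ref{j} to obtain a piecewise-affine path $g:[0,2]\to X$ joining two prescribed smooth unit vectors along which almost every $g(s)$ is jointly smooth. Both $s\mapsto\|g(s)\|^{2}$ and $s\mapsto\|\mathcal{T}g(s)\|^{2}$ are then Lipschitz (hence absolutely continuous), and at smooth $g(s)$ the identities $\rho_{\pm}(u,v)=\|u\|F_{u}(v)$ and $F_{\mathcal{T}g(s)}\circ\mathcal{T}=c_{g(s)}F_{g(s)}$ combine to give
\[
\frac{d}{ds}\|\mathcal{T}g(s)\|^{2}=c_{g(s)}^{2}\,\frac{d}{ds}\|g(s)\|^{2}\quad\text{almost everywhere.}
\]
A direct differentiation of the ratio $c_{g(s)}^{2}=\|\mathcal{T}g(s)\|^{2}/\|g(s)\|^{2}$ then shows $(d/ds)\,c_{g(s)}^{2}=0$ almost everywhere, and absolute continuity forces $c_{g(\cdot)}^{2}$ to be constant on $[0,2]$. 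Varying the parameter $\xi$ in Lemma \ref{j} shows that any two smooth directions give the same $c$, and continuity of $\|\mathcal{T}u\|/\|u\|$ extends the equality to all of $X\setminus\{0\}$. The normalization $\|\mathcal{T}\|=1$ then pins the common value at $1$, completing the proof.
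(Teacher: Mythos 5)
Your route is the same as the paper's: (ii)$\Rightarrow$(iii) and (iii)$\Rightarrow$(i) are the identical direct computations, and for (i)$\Rightarrow$(ii) you adapt the Blanco--Turn{\v{s}}ek scheme through Lemmas \ref{h}, \ref{j} and \ref{pp}, with your constancy-of-$c_{g(s)}^{2}$ argument along the path being essentially the paper's computation that $\|\Phi(s)\|_{\mathcal{T}}$ is constant. However, your (i)$\Rightarrow$(ii) has a genuine omission: you never prove that $\mathcal{T}$ is injective (equivalently, that $\mathcal{T}$ does not annihilate a nonzero vector of the plane $E$ spanned by $u$ and $v$). Your argument silently needs this: the support functional $F_{\mathcal{T}u}$ only exists when $\mathcal{T}u\neq 0$, the scalar $c_u=\|\mathcal{T}u\|/\|u\|$ must be nonzero to conclude anything, and the measure-theoretic input requires the pulled-back function $x\mapsto\|\mathcal{T}x\|$ to be a \emph{norm} on $E\cong\mathbb{R}^2$ so that Lemma \ref{h} applies; if $\ker\mathcal{T}\cap E\neq\{0\}$ it is only a seminorm. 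The paper spends the first part of its proof exactly on this point, deducing injectivity from hypothesis (i) by a contradiction argument based on Proposition \ref{k}(iv),(v) applied to $w=u+\frac{1}{2n}v$ with $u\in\ker\mathcal{T}$; some such step must be supplied before your machinery starts.

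The second gap is your justification that almost every point of the path is ``jointly smooth'': you argue via ``the preimage under $\mathcal{T}$ of the non-smooth directions of $\mathcal{Y}$'' being null. Lemma \ref{h} says nothing about $\mathcal{Y}$, which may be infinite-dimensional, and even for $\mathcal{Y}=\mathbb{R}^n$ the preimage of a null set under a linear map into a higher-dimensional space need not be null: the plane $\mathcal{T}(E)$ can consist entirely of non-smooth points of $\mathcal{Y}$ (for instance a plane contained in a coordinate hyperplane of $(\mathbb{R}^3,\|\cdot\|_1)$), so smoothness of $\mathcal{Y}$ at $\mathcal{T}u$, which your use of $F_{\mathcal{T}u}$ and of Lemma \ref{pp} in $\mathcal{Y}$ presupposes, may fail for every $u\in E$. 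The correct move --- the one the paper makes --- is to define the second norm $\|x\|_{\mathcal{T}}:=\|\mathcal{T}x\|$ on $E$ (here injectivity enters again), apply Lemma \ref{h} to both norms on $E\cong\mathbb{R}^2$, and run Lemma \ref{pp} and the support-functional comparison $G_x=\lambda(x)F_x$ inside $(E,\|\cdot\|)$ and $(E,\|\cdot\|_{\mathcal{T}})$; only smoothness of the restricted, pulled-back norm is needed, and since $\rho_{\alpha,\beta}$ computed in $(E,\|\cdot\|_{\mathcal{T}})$ at $(x,y)$ coincides with $\rho_{\alpha,\beta}(\mathcal{T}x,\mathcal{T}y)$ in $\mathcal{Y}$, hypothesis (i) transfers to that setting. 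With these two repairs your proposal becomes the paper's proof.
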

\begin{proof}
First we prove that $(i)$ implies $(ii)$.
Assume that $\mathcal{T}$ preserves  $\rho_{\alpha,\beta}$-orthogonality. To prove that $\mathcal{T}$ is a scalar multiple of isometry, we show that\\
(a) $\mathcal{T}$ is injective; that is $\mathcal{T}u=0$ implies $u=0$;\\
(b) $\mathcal{T}$ is an isometry; that is $\|u\|=\|v\|$ implies $\|\mathcal{T}u\|=\|\mathcal{T}v\|$.\\
To prove (a), suppose that $\mathcal{T}u=0$, for some $u \neq 0$. Assume that $v$ is a component of $X$ that is independent to $u$. Then we can select a number $n \in \mathbb{N}$ such a way that $\frac{\|v\|(\alpha+\beta)}{n\|u+\frac{1}{2n}v\|}<1$. Taking $w=u+\frac{1}{2n}v$ and using Proposition \ref{k}(v) we have
\begin{align} \label{p}
    0< 1-\frac{\|v\|(\alpha+\beta)}{n\|w\|}=1-\frac{\|v\|\|w\|(\alpha+\beta)}{n\|w\|^2} \leq 
    1-\frac{\rho_{\alpha,\beta}(w,v)}{n\|w\|^2}.
\end{align}
Also, $\rho_{\alpha,\beta}(w,-\frac{{\rho_{\alpha,\beta}}(w,v)}{\|w\|^2(\alpha+\beta)}w+v)=0$. Since $\mathcal{T}$ preserves 
$\rho_{\alpha,\beta}$-orthogonality, so $\rho_{\alpha,\beta}(\mathcal{T}w,-\frac{{\rho_{\alpha,\beta}}(w,v)}{\|w\|^2(\alpha+\beta)}\mathcal{T}w+\mathcal{T}v)=0$. Since $Tu = 0$, by Proposition \ref{k}(iv) we have that
\begin{align} \label{q}
\nonumber0 &=\rho_{\alpha,\beta}(\mathcal{T}w,-\frac{{\rho_{\alpha,\beta}}(w,v)}{\|w\|^2(\alpha+\beta)}\mathcal{T}w+\mathcal{T}v)\\
\nonumber&=\rho_{\alpha,\beta}\left(\frac{1}{n}\mathcal{T}v,-\frac{{\rho_{\alpha,\beta}}(w,v)}{\|w\|^2(\alpha+\beta)}\frac{1}{n}\mathcal{T}v+\mathcal{T}v\right)\\
&=\frac{1}{n}\left(1-\frac{\rho_{\alpha,\beta}(w,v)}{n\|w\|^2}\right)\|\mathcal{T}v\|^2.
\end{align}
From \eqref{p} and \eqref{q}, we have $\mathcal{T}v=0$ for all $v$
independent of $u$.
Hence $\mathcal{T}=0$, a contradiction. Therefore  $\mathcal{T}$ is injective.\\
Now, we have to prove that $\|u\|=\|v\|$ implies $\|\mathcal{T}u\|=\|\mathcal{T}v\|$. 

It holds easily if $u$ and $v$ are linearly dependent, for then
$\|u\|=\|v\|$ implies $v=ru$ for some scalar $r$ of modulus $1$. So, $\|\mathcal{T}u\|=\|\mathcal{T}v\|$.

Let $u$ and $v$ be linearly independent.
Assume that $E$ is a linear subspace of $X$ generated by $u,v$. Define $\|x\|_\mathcal{T}:=\|\mathcal{T}x\|$ for $x \in E$. Here $\|.\|$ is a norm on $E$ because $\mathcal{T}$ is injective. Let $\mathcal{V}$ represent the set of points $x \in E$ at which at least one of the norms, $\|.\|$ or $\|.\|_T$, is not Gateaux differentiable. Let $F_x$ and $G_x$ be unique support functionals at $x$ with respect to $\|.\|$ and $\|.\|_\mathcal{T}$ respectively  for $x \in E \backslash \mathcal{V}$.
Let $y \in kerF_x$. Since $(E,\|.\|)$ is smooth at $x$, using Lemma \ref{pp}, we have $\rho_{\alpha,\beta}(x,y)=0$.
Hence $\rho_{\alpha,\beta}(\mathcal{T}x,\mathcal{T}y)=0$. Since $(E,\|.\|_\mathcal{T})$ is smooth at $x$, we get $G_x(v)=\frac{1}{\|Tu\|}\rho_{\alpha,\beta}(\mathcal{T}x,\mathcal{T}y)=0$. Therefore, $y \in kerG_x$ and $ker F_x \subseteq ker G_x$. 
Then there exists a function
$\lambda:E\backslash \mathcal{V} \rightarrow \mathbb{R}$ such that $G_x=\lambda(x)F_x$ for all $x \in E\backslash \mathcal{V}$. As
\begin{align*}
\|\mathcal{T}x\|=G_x(x)=\lambda(x)F_x(x)=\lambda(x)\|x\|, ~ x \in E\backslash \mathcal{V}, 
\end{align*}
it is easily observed that $\lambda$ is in fact real-valued.

Let $ \mathcal{L} : \mathbb{R}^2\rightarrow E$ defined by $(c,d) \mapsto cu+d(v-u)$. It is obvious that, $\mathcal{L}$ is a linear isomorphism.
Set $D=\mathcal{L}^{-1}(\mathcal{V})$. From the definition of $\mathcal{V}$, it is clear that $D$ is the collection of those points $(c,d) \in \mathbb{C}^2$ at which at least one of the functions $(c,d) \mapsto \|\mathcal{L}(c,d)\|$ or  $(c,d) \mapsto \|\mathcal{L}(c,d)\|_\mathcal{T}$ is not Gateaux differentiable. As both these functions are norms in $\mathbb{R}^2$,
so by the 
Lemma \ref{h},  $\mu^2(D) = 0$. Therefore
$g:[0,2]\rightarrow \mathbb{R}^2$ be the path obtained in Lemma \ref{j}. 
Then $\Phi:[0, 2] \rightarrow E$ is defined by
\begin{align*}
 \Phi(s):=\frac{\|u\|}{\|\mathcal{L}(g(s))\|}\mathcal{L}(g(s)) , ~~s\in [0,2]  
\end{align*}
and $\mu\{s :\Phi(s)\in \mathcal{V}\}=\mu\{s:g(s)\in D\}=0$. \\
Here, 
\begin{align*}
\mathcal{L}(g(s))=
 \begin{cases}
            &u+s\xi(v-u), ~s \in [0,1]\\
            &u+((2-s)\xi+(s-1))(v-u),~s\in [1,2]\\
\end{cases}
\end{align*}
Now, $s_1,s_2 \in [0,1]$ we have
\begin{align*}
 |\|\mathcal{L}(g(s_1))\|-\|\mathcal{L}(g(s_2))\|| \leq |\xi||s_1-s_2|\|v-u\|.  
\end{align*}
If $s_1,s_2 \in [1,2]$, we have 
\begin{align*}
 |\|\mathcal{L}(g(s_1))\|-\|\mathcal{L}(g(s_2))\|| \leq |1-\xi||s_1-s_2|\|v-u\|.  
\end{align*}
Finally, if $s_1 \in [0,1]$ and $s_2 \in [1,2]$, then
\begin{align*}
 |\|\mathcal{L}(g(s_1))\|-\|\mathcal{L}(g(s_2))\|| \leq (1+|\xi|)|s_1-s_2|\|v-u\|.  
\end{align*}
Therefore $s \mapsto \|Lg(s)\|$ satisfy Lipschitz conditions. Similarly  $s \mapsto \|\mathcal{L}g(s)\|_T$ satisfy Lipschitz conditions. It follows that
\begin{align*}
\|\Phi(s)\|_\mathcal{T}:=\frac{\|u\|\|\mathcal{L}(g(s))\|_\mathcal{T} }{\|\mathcal{L}(g(s))\|}   
\end{align*}
is absolutely continuous and that
\begin{align*}
\mu\{t:\Phi^{'}(s) ~\textit{does not exist}\}=\mu\{s:\|\mathcal{L}g(s)\|^{'} ~\textit{does not exist}\}=0.  
\end{align*}
Also note that $s \mapsto \|\Phi(s)\|=\|u\|$ is a constant function. We then have
$\|\Phi(s)\|^{'}_T=0$ $\mu$-a.e. on $[0,2]$. Hence, $s \mapsto \|\phi(s)\|_\mathcal{T}$  is a constant function, and we have that $\|u\|_\mathcal{T}=\|\Phi(0)\|_\mathcal{T}=\|\Phi(2)\|_\mathcal{T}=\|v\|_\mathcal{T}$. Hence $\|\mathcal{T}u\|=\|\mathcal{T}v\|$.\\
Therefore $T$ is an isometry.

 We prove that $(ii)$ implies $(iii)$. Suppose $(ii)$ holds. So $\|\mathcal{T}u\|=\|\mathcal{T}\|\|u\|$, for all $u \in X$.
Since $T$ is bounded linear, we have 
\begin{align*}
\rho_{+}(Tu,Tv)=&\|Tu\|\lim_{t \to 0^+}\frac{\|Tu+t Tv\|-\|Tu\|}{t}\\
=& \|T\|\|u\|\lim_{t \to 0^+}\frac{\|T\|\|u+t v\|-\|T\|\|u\|}{t}\\
=& \|T\|^2\|u\|\lim_{t \to 0^+}\frac{\|u+t v\|-\|u\|}{t}\\
=& \|T\|^2\rho_{+}(u,v).
\end{align*} 
Similarly, $\rho_{-}(u,v)=\|T\|^2\rho_{-}(u,v)$. From the definition of $\rho_{\alpha,\beta}$, we have $\rho_{\alpha,\beta}(\mathcal{T}u,\mathcal{T}v)=\|\mathcal{T}\|^2\rho_{\alpha,\beta}(u,v)$, for all $u,v \in X$.\\ 
Next we show that $(iii)$ implies $(i)$. Let $(iii)$ holds. Then $\rho_{\alpha,\beta}(\mathcal{T}u,\mathcal{T}v)=\|\mathcal{T}\|^2\rho_{\alpha,\beta}(u,v)$, for all $u,v \in X$. If $\rho_{\alpha,\beta}(u,v)=0$, then $\rho_{\alpha,\beta}(\mathcal{T}u,\mathcal{T}v)=0$. Therefore  $\mathcal{T}$ preserves  $\rho_{\alpha,\beta}$-orthogonality.

\end{proof}
\subsection{$\alpha,\beta$-Angular Norms}
From a geometrical perspective, the idea of an angle and the question of how to measure angles is always intriguing. In this section, we look at a specific kind of angle function that depends on $\rho_{\alpha, \beta}$ functional. \\
In a real inner product space $(X. \langle .,.\rangle)$, the angle $\theta(u,v)$ between two non-zero elements $u, v$ is defined by 
\begin{align*}
 \theta(u, v)= \arccos \frac{\langle u,v\rangle}{\|u\|\|v\|}   
\end{align*}
\begin{defn}\label{ac}
The number
\begin{align*}
ang_{\alpha,\beta}(u,v)=\theta_{\alpha,\beta}(u, v)= \arccos \frac{\rho_{\alpha,\beta}(u, v)}{(\alpha+\beta)\|u\|\|v\|}    
\end{align*}
is called the $\rho_{\alpha,\beta}$-angle between the element $u$ and the element $v$ in a normed linear space.    
\end{defn}
\begin{rem}
\begin{itemize}
    \item[(i)] From the Proposition \ref{k}$(v)$ we have $-1\leq \frac{\rho_{\alpha,\beta}(u,v)}{(\alpha+\beta)\|u\|\|v\|}\leq 1$, so the definition of angle is well-define.
    \item[(ii)] If the norm in $X$ arises from an inner product, it is easy to see that $\rho_{\alpha,\beta}$-angle agree with the angle defined by the inner product.
\end{itemize}
\end{rem}
\begin{prop}
$\rho_{\alpha,\beta}$-angle satisfies the following properties:
\begin{itemize}
\item[(a)] If $u$ and $v$ are of the same directions, then $\theta_{\alpha,\beta}(u, v)=0$, and if $u$ and $v$ are of 
opposite directions, then $\theta_{\alpha,\beta}(u, v)=\pi$ (part of parallelism property). 
\item[(b)]  $\theta_{\alpha,\beta}(au, bv)=\begin{cases}
\theta_{\alpha,\beta}(u, v),~\text{ if~} ab > 0;\\
\pi-\theta_{\beta, \alpha}(u, v),~\text{ if~} ab < 0
\end{cases}$ (homogeneity property);
\end{itemize}
\end{prop}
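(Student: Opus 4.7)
\medskip

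\noindent\textbf{Proof plan.} My plan is to establish both parts by direct computation of the ratio $\rho_{\alpha,\beta}(au,bv)/\bigl((\alpha+\beta)\|au\|\,\|bv\|\bigr)$ and then invoking Definition \ref{ac} together with the elementary identity $\arccos(-x)=\pi-\arccos(x)$. The only real ingredients are Proposition \ref{k}, parts (i)--(iii), together with careful bookkeeping of signs, since passing a negative scalar through $\rho_{\alpha,\beta}$ swaps the subscripts to $\rho_{\beta,\alpha}$.

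\medskip

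\noindent\textbf{Step 1 (part (a)).} If $u,v$ have the same direction, write $v=cu$ with $c>0$. By Proposition \ref{k}(ii) and (i),
\[
\rho_{\alpha,\beta}(u,cu)=c\,\rho_{\alpha,\beta}(u,u)=c(\alpha+\beta)\|u\|^2,
\]
while $\|v\|=c\|u\|$, so the ratio equals $+1$ and $\theta_{\alpha,\beta}(u,v)=\arccos(1)=0$. If $v=cu$ with $c<0$, Proposition \ref{k}(iii) gives $\rho_{\alpha,\beta}(u,cu)=c\,\rho_{\beta,\alpha}(u,u)=c(\alpha+\beta)\|u\|^2$ (note that $\rho_{\beta,\alpha}(u,u)=\rho_{\alpha,\beta}(u,u)$ by part (i), which is symmetric in the labels), and $\|v\|=-c\|u\|$; the ratio is $-1$ and $\theta_{\alpha,\beta}(u,v)=\pi$.

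\medskip

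\noindent\textbf{Step 2 (part (b), case $ab>0$).} I would split into the subcases $a,b>0$ and $a,b<0$. In the first subcase, two applications of Proposition \ref{k}(ii) yield $\rho_{\alpha,\beta}(au,bv)=ab\,\rho_{\alpha,\beta}(u,v)$, and the denominator $(\alpha+\beta)\|au\|\|bv\|$ equals $ab(\alpha+\beta)\|u\|\|v\|$, so the ratio is unchanged. In the second subcase, Proposition \ref{k}(iii) applied first in the left slot and then in the right slot introduces \emph{two} swaps $\alpha\leftrightarrow\beta$, which cancel, giving again $\rho_{\alpha,\beta}(au,bv)=ab\,\rho_{\alpha,\beta}(u,v)$ (with $ab>0$); the denominator is $|a||b|(\alpha+\beta)\|u\|\|v\|=ab(\alpha+\beta)\|u\|\|v\|$. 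Hence $\theta_{\alpha,\beta}(au,bv)=\theta_{\alpha,\beta}(u,v)$ in both subcases.

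\medskip

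\noindent\textbf{Step 3 (part (b), case $ab<0$).} Without loss of generality take $a>0$, $b<0$. Proposition \ref{k}(ii) peels off $a$ cleanly, while Proposition \ref{k}(iii) peels off $b$ with a single swap of the indices:
\[
\rho_{\alpha,\beta}(au,bv)=a\,\rho_{\alpha,\beta}(u,bv)=ab\,\rho_{\beta,\alpha}(u,v).
\]
The denominator is $(\alpha+\beta)\|au\|\|bv\|=-ab(\alpha+\beta)\|u\|\|v\|$ since $b<0$. Therefore the ratio equals $-\rho_{\beta,\alpha}(u,v)/\bigl((\alpha+\beta)\|u\|\|v\|\bigr)$, and $\arccos(-x)=\pi-\arccos(x)$ gives $\theta_{\alpha,\beta}(au,bv)=\pi-\theta_{\beta,\alpha}(u,v)$. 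The symmetric subcase $a<0$, $b>0$ is handled identically, with the swap now arising from the left slot. The main (minor) obstacle is simply keeping the sign conventions and the index swap straight across the four sign combinations for $(a,b)$; there is no deep analytic content beyond Proposition \ref{k}.
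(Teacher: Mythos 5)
Your proposal is correct and takes essentially the same route as the paper: both compute $\rho_{\alpha,\beta}(au,bv)$ and the norms via the homogeneity properties of Proposition \ref{k} and then apply $\arccos$ (with $\arccos(-x)=\pi-\arccos(x)$ in the sign-reversing case). Your explicit case-by-case sign and index-swap bookkeeping merely spells out what the paper's proof compresses into a single displayed computation.
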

\begin{proof}
Using the Proposition \ref{k} and Definition \ref{ac}, we have
\begin{itemize}
\item[(a)] If $v = tu$, then
\begin{align*}
\theta_{\alpha,\beta}(u,v)
=&\theta_{\alpha,\beta}(u, tu)\\
=& \arccos \frac{\rho_{\alpha,\beta}(u, tu)}{(\alpha+\beta)\|u\|\|tu\|}\\
=&\arccos \frac{t(\alpha+\beta)\|u\|^2}{|t|(\alpha+\beta)\|u\|^2}\\
=&\begin{cases}
\arccos (1),~\text{ if~} t > 0;\\
\arccos (-1),~\text{ if~} t < 0
\end{cases}\\
=&\begin{cases}
0,~\text{ if~} t > 0;\\
\pi,~\text{ if~} t < 0.
\end{cases}
\end{align*}

\item[(b)] For $a,b\in \mathbb{R}$, we have 
\begin{align*}
\theta_{\alpha,\beta}(au,bv)
=& \arccos \frac{\rho_{\alpha,\beta}(au,bv)}{(\alpha+\beta)\|au\|\|bv\|}\\
=&\begin{cases}
\arccos \frac{\rho_{\alpha,\beta}(u,v)}{(\alpha+\beta)\|u\|\|v\|},~\text{ if~} ab > 0;\\
\arccos (-\frac{\rho_{\beta, \alpha}(u,v)}{(\alpha+\beta)\|u\|\|v\|}),~\text{ if~} ab < 0
\end{cases}\\
=&\begin{cases}
\theta_{\alpha,\beta}(u, v),~\text{ if~} ab > 0;\\
\pi-\theta_{\beta, \alpha}(u, v),~\text{ if~} ab < 0
\end{cases}
\end{align*}
\end{itemize}
\end{proof}
\begin{defn}\label{lm}
 Two norms, $\|.\|_1$ and $\|.\|_2$, on $X$ have the $\alpha, \beta$-angularly property
if there exists a constant $K$ such that for all non-zero elements $u, v \in X$,
\begin{align*}
\tan\Big(\frac{\theta_{\alpha,\beta,2}(u,v)}{2}\Big)\leq K\tan\Big(\frac{\theta_{\alpha,\beta,1}(u,v)}{2}\Big).
\end{align*}
Here $\theta_{\alpha,\beta,1}(u, v)$ and $\theta_{\alpha,\beta,2}(u, v)$ are the $\alpha,\beta$-angles from $u$ to $v$ relative to $\|.\|_1$ and $\|.\|_2$, respectively.  
\end{defn}

In a normed linear space $(X,\|.\|)$ is strictly convex (rotund) if and only
if $u=v$ and $\|u\|=\|v\|= 1$ together imply that $\|tu+(1-t)v\|< 1$ for all $0<t<1$.
In the following theorem we show that $\alpha,\beta$-angularly property of norms share a geometric property.

\begin{thm}
 Suppose the norms $\|.\|_1$ and $\|.\|_2$ have the $\alpha, \beta$-angularly property
on $X$. Then the following statements are equivalent:
\begin{itemize}
\item[(i)] $(X, \|.\|_1)$ is strictly convex.
\item[(ii)] $(X, \|.\|_2)$ is strictly convex.
\end{itemize}  
\end{thm}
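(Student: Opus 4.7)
The plan is to reduce strict convexity to a purely angular condition on $\rho_{\alpha,\beta}$ and then transfer that condition between the two norms via the $\alpha,\beta$-angularly property. First I would establish the key lemma: \emph{a normed space $(X,\|\cdot\|)$ is strictly convex if and only if, for every pair of non-zero vectors $u,v \in X$, the equality $\theta_{\alpha,\beta}(u,v) = 0$ implies $v = su$ for some $s > 0$.}

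For the forward direction of the lemma, I would unfold $\theta_{\alpha,\beta}(u,v) = 0$ as $\rho_{\alpha,\beta}(u,v) = (\alpha+\beta)\|u\|\|v\|$ and combine it with the bounds $\rho_{\pm}(u,v) \leq \|u\|\|v\|$ from Proposition \ref{k}(v): since $\alpha(\|u\|\|v\|-\rho_{-})+\beta(\|u\|\|v\|-\rho_{+})=0$ with both summands non-negative, this forces $\rho_{+}(u,v) = \|u\|\|v\|$. The convexity of $t \mapsto \|u+tv\|$ together with the triangle upper bound then yields $\|u+v\| = \|u\|+\|v\|$, and strict convexity gives $v = su$ with $s > 0$. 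For the reverse direction, if $X$ is not strictly convex I would pick unit vectors $u_0 \ne v_0$ with $\|u_0+v_0\| = 2$, so that the segment $[u_0,v_0]$ lies on the unit sphere. Restricting the norm to the two-dimensional subspace $\mathrm{span}\{u_0, v_0\}$ and invoking Lemma \ref{h}, I choose $s \ne 1/2$ such that the restricted planar norm is Gateaux differentiable at $w = (1-s)u_0 + sv_0$. Any support functional $f$ at $w$ then satisfies $f(w) = 1$ and $(1-s)f(u_0) + s f(v_0) = 1$ with $|f(u_0)|, |f(v_0)| \leq 1$, which forces $f(u_0) = f(v_0) = 1$ and hence $f(u_0+v_0) = 2$. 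Differentiability in the plane gives $\rho_{-}(w, u_0+v_0) = \rho_{+}(w, u_0+v_0) = \|w\|\cdot 2 = \|w\|\|u_0+v_0\|$, so $\theta_{\alpha,\beta}(w, u_0+v_0) = 0$, while linear independence of $w$ and $u_0+v_0$ follows from $s \ne 1/2$.

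With the lemma at hand, the $\alpha,\beta$-angularly property transports the condition $\theta_{\alpha,\beta,i}(u,v) = 0$ between $i = 1$ and $i = 2$: the inequality $\tan(\theta_{\alpha,\beta,2}(u,v)/2) \leq K\tan(\theta_{\alpha,\beta,1}(u,v)/2)$ shows $\theta_{\alpha,\beta,1}(u,v) = 0 \Rightarrow \theta_{\alpha,\beta,2}(u,v) = 0$, and the symmetric reading of Definition \ref{lm} (applying the same property with the roles of the two norms swapped) yields the converse. Combining with the lemma applied to each norm then produces (i) $\Leftrightarrow$ (ii). The hard part will be the reverse direction of the lemma --- producing a non-parallel pair with zero $\rho_{\alpha,\beta}$-angle from the failure of strict convexity. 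The support-functional argument is what simultaneously forces both $\rho_{+}$ and $\rho_{-}$ to hit their common upper bound $\|w\|\|u_0+v_0\|$, and that simultaneous equality is what hinges on the Gateaux-differentiability choice of $w$ within the restricted planar norm; without it, one only obtains $\rho_{+} = \|w\|\|u_0+v_0\|$, which is insufficient to conclude $\theta_{\alpha,\beta} = 0$ when $\alpha > 0$.
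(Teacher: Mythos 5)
Your overall architecture --- characterize strict convexity of a single norm by ``$\theta_{\alpha,\beta}(u,v)=0$ forces $v=su$ with $s>0$,'' then transfer zero angles between the two norms through the angularly property --- is in substance the same argument the paper runs in contrapositive form with extreme points: a flat segment of the $\|\cdot\|_2$-sphere gives $\rho_{\alpha,\beta,2}=(\alpha+\beta)\|\cdot\|_2\|\cdot\|_2$, the angular property moves the zero angle to norm $1$, and $\rho_{\alpha,\beta,1}(v,w)=(\alpha+\beta)\|v\|_1\|w\|_1$ forces $\|v+w\|_1=\|v\|_1+\|w\|_1$, contradicting strict convexity. Your forward half of the lemma (zero angle $\Rightarrow \rho_+=\|u\|\|v\|$ $\Rightarrow$ triangle equality $\Rightarrow$ positive parallelism) is correct, and your explicitly symmetric reading of Definition \ref{lm} matches what the paper itself implicitly uses.

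The genuine gap is in the reverse half of your lemma. You invoke Lemma \ref{h} to choose $s\neq 1/2$ so that the planar norm is Gateaux differentiable at $w=(1-s)u_0+sv_0$. But Lemma \ref{h} only says the non-smooth points form a set of two-dimensional Lebesgue measure zero, and the segment $[u_0,v_0]$ is itself a planar null set, so almost-everywhere differentiability gives you no differentiability point on the segment; as justified, that step fails. The claim you need is nevertheless true and needs no measure theory. Either note that $u_0\neq v_0$ are unit vectors with $\|u_0+v_0\|=2$, hence linearly independent, and your own support-functional computation shows every support functional $f$ at an interior point $w$ of the segment satisfies $f(u_0)=f(v_0)=1$; on the two-dimensional span these two values determine $f$ uniquely, so the planar norm is automatically smooth at every such $w$. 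Or, simpler still and closer to the paper's own device, use that the whole segment lies on the unit sphere: for small $|t|$ one has $w+t(u_0+v_0)=(1+2t)\bigl(\tfrac{1-s+t}{1+2t}u_0+\tfrac{s+t}{1+2t}v_0\bigr)$, a positive multiple of a point of the segment, so $\|w+t(u_0+v_0)\|=1+2t$ and hence $\rho_-(w,u_0+v_0)=\rho_+(w,u_0+v_0)=2=\|w\|\,\|u_0+v_0\|$ directly from the definition; this gives $\theta_{\alpha,\beta}(w,u_0+v_0)=0$ with $w$ and $u_0+v_0$ non-parallel precisely because $s\neq 1/2$. With that repair your proof goes through and is essentially the paper's argument repackaged as a characterization lemma.
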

\begin{proof}
First we show that $(i)$ implies $(ii)$.\\
A normed linear space is considered strictly convex if each boundary point of the unit ball is an extreme point. Therefore, it is sufficient to show that if $\frac{u}{\|u\|_1}$ is an extreme point of the $\|.\|_1$-unit ball, then $\frac{u}{\|u\|_2}$ is an extreme point of the $\|.\|_2$-unit ball. Let us consider that $\frac{u}{\|u\|_2}$
is not an extreme point of the $\|.\|_2$-unit ball. Then there are points $v$ and $w$ in $X$ such that $\frac{u}{\|u\|_2}=\frac{v+w}{2}$ and the closed line segment from
$v$ to $w$ is contained in the $\|.\|_2$-unit ball. If $s\in [0,1]$ then the points $(1-s)v + sw$
and $sv +(1-s)w$ are on the line segment and hence in the $\|.\|_2$-unit ball. Thus,
\begin{align*}
 2 = \|v+w\|_2 =&\|(1-s)v + sw + sv + (1 - s)w\|_2\\
\leq&\|(1-s)v + sw\|_2 +\|sv+(1-s)w\|_2 \\
\leq& 1 + 1 = 2.   
\end{align*}
It follows that $\|(1-s)v + sw\|_2 = \|sv + (1 - s)w\|_2 = 1$. In particular, we observe
that $\|v\|_2=\|w\|_2=1$.
Hence
\begin{align*}
\rho_{\alpha,\beta,2}(v, w)=&\alpha\rho_{-,2}(v,w)+\beta\rho_{+,2}(v,w)\\
=&\alpha\|v\|_2\lim_{t \to 0^-}\frac{\|v+tw\|_2-\|v\|_2}{t}+\beta\|v\|_2\lim_{t \to 0^+}\frac{\|v+tw\|_2-\|v\|_2}{t}\\
=&\alpha\lim_{s \to 0^-}\frac{\|v+\frac{s}{1-s}w\|_2-1}{\frac{s}{1-s}}+\beta\lim_{s \to 0^+}\frac{\|v+\frac{s}{1-s}w\|_2-1}{\frac{s}{1-s}}\\
=&\alpha\lim_{s \to 0^-}\frac{\|(1-s)v+sw\|_2-(1-s)}{s}+\beta\lim_{s \to 0^+}\frac{\|(1-s)v+sw\|_2-(1-s)}{s}\\
=&\alpha\lim_{s \to 0^-}\frac{1-(1-s)}{s}+\beta\lim_{s \to 0^+}\frac{1-(1-s)}{s}\\
=&\alpha+\beta.
\end{align*}
It follows that $\rho_{\alpha,\beta,2}(v, w)= \alpha+\beta$, $\cos(\theta_{\alpha,\beta,2}(v, w))= 1$, and $\tan(\theta_{\alpha,\beta,2})(v,w)=0$. By the
$\alpha,\beta$-angularly property, $\tan(\theta_{\alpha,\beta,1})(v,w)=0$ as well. This implies $\cos(\theta_{\alpha,\beta,1}(v,w))= 1$
and hence $\rho_{\alpha,\beta,1}(v,w)=(\alpha+\beta)\|v\|_1\|w\|_1$.
Now, 
\begin{align*}
(\alpha+\beta)\|v\|_1\|w\|_1&=
\rho_{\alpha,\beta,1}(v,w)\\
&=\rho_{\alpha,\beta,1}(v,v+w-v) \\
&=\rho_{\alpha,\beta,1}(v,v+w)+\rho_{\alpha, \beta,1}(v,-v)\\
 &=\rho_{\alpha,\beta,1}(v,v+w)-\rho_{\beta,\alpha,1}(v,v)\\
 &\leq (\alpha+\beta)\|v\|_1\|v+w\|_1-(\alpha+\beta)\|v\|_{1}^2\\
 &\leq (\alpha+\beta)(\|v\|_1\|v+w\|_1-\|v\|_{1}^2)\\
 &\leq (\alpha+\beta)\|v\|_1(\|v+w\|_1-\|v\|_1)\\
 &\leq (\alpha+\beta)\|v\|_1\|w\|_1.
\end{align*}
and hence
$(\alpha+\beta)\|v\|_1(\|v+w\|_1-\|v\|_1)=\|v\|_1\|w\|_1$. This implies that $\|v+w\|_1=\|v\|_1+\|w\|_1$.\\
Also,
\begin{align*}
\frac{u}{\|u\|_2}=\frac{\frac{v+w}{2}\|x\|_2}{\|\frac{v+w}{2}\|x\|_2\|_1}=\frac{v+w}{\|v+w\|_1}=\frac{\|v\|_1}{\|v\|_1+\|w\|_1}\frac{v}{\|v\|_1}+\frac{\|w\|_1}{\|v\|_1+\|w\|_1}\frac{w}{\|w\|_1}
\end{align*}
which is a convex combination of the points $\frac{v}{\|v\|_1}$ and $\frac{w}{\|w\|_1}$. Therefore, $\frac{u}{\|u\|_1}$ is an interior point of the line segment from $\frac{v}{\|v\|_1}$ to $\frac{w}{\|w\|_1}$. The convexity of this line segment demonstrates that the entire line segment lies in the $\|.\|_1$-unit ball. A contradiction follows from the fact that $\frac{u}{\|u\|_1}$
is not an extreme point of the $\|.\|_1$-unit ball.\\
Using a similar argument, we get $(ii)$ implies $(i)$.
\end{proof}

\section{Conclusion}
This article defines the $\rho_{\alpha,\beta}$ orthogonality as a linear combination of norm derivatives and explores its several intriguing geometric characteristics. Furthermore, we provide several characterizations of smooth normed spaces founded on the concept of $\rho_{\alpha,\beta}$-orthogonality. Also, we examine the connection between linear preserving mapping and $\rho_{\alpha,\beta}$-orthogonality. Is it possible to find a more general class of orthogonality than what is defined in this paper, with interesting geometric properties?

\textbf{Declarations }

\textbf{ Availability of data and materials :}
Data sharing not applicable to this article as no datasets were generated or analysed during the current study.

\textbf{Competing Interests :} The authors declare that they have no competing interests. 
 
 \textbf{Authors' contributions :}
All authors contribute equally.




\begin{thebibliography}{1}
\bibitem{amir1986characterizations} D. Amir, \textit{Characterizations of Inner Product Spaces}. Operator Theory: Advances and Applications, Birkhauser Basel,
 1986.

\bibitem{alsina2010norm} C. Alsina, and J. Sikorska, and  M.S. Tom\'as, \textit{Norm Derivatives and Characterizations of Inner Product Spaces}. World Scientific, 2010.

\bibitem{balestro2017angles}  V. Balestro, and {\'A}. G. Horv{\'a}th, and H. Martini, and R. Teixeira, \textit{Angles in normed spaces}. Aequationes Mathematicae \textbf{91}(2007), 201--236.

\bibitem{birkhoff1935orthogonality}  G. Birkhoff, \textit{Orthogonality of matrices and some distance problems}. Duke Mathematical Journal \textbf{1(2)}(1935), 169--172.

\bibitem{blanco2006maps} A. Blanco, and A. Turn{\v{s}}ek: \textit{On maps that preserve orthogonality in normed spaces}. Proceedings of the Royal Society of Edinburgh Section A: Mathematics \textbf{136(4)}, 709--716 (2006).

\bibitem{dragomir2004semi} S. S. Dragomir, \textit{Semi-inner Products and Applications}. Nova Science Publication, 2004.

\bibitem{giles1967classes}  J. R. Giles: \textit{Classes of semi-inner-product spaces}. Transactions of the American Mathematical Society \textbf{129(3)}, 436--446 (1967).

\bibitem{10.1215/S0012-7094-45-01223-3}  R. C. James,  
\textit{Orthogonality in normed linear spaces}. Duke Mathematical Journal 
\textbf{12(2)} (1945), 291-302.

\bibitem{james1947orthogonality} R. C. James, 
\textit{Orthogonality and linear functionals in normed linear spaces}. Transactions of the American Mathematical Society \textbf{62(2)}(1947), 265--292.

\bibitem{lumer1961semi} G. Lumer, 
\textit{Semi-inner-product spaces}. Transactions of the American Mathematical Society \textbf{100(1)}, 29--43 (1961).

\bibitem{milicic1987g}  P. M. Milicic, \textit{Sur la g-orthogonalite dans un espace norme}. The Quarterly Journal of Mathematics \textbf{39} (1987), 325--334.

\bibitem{milicic2007b}  PM. Milicic, \textit{On the B-angle and g-angle in normed spaces}. Journal of inequalities in pure and Applied mathematics \textbf{8(3)} (2007), 1--9.


\bibitem{Robert1934orthogonalities} B.D. Robert, 
\textit{On the geometry of abstract vector spaces}. Tohoku Mathematical Journal. \textbf{3(3)} (1935), 699--709.

\bibitem{zamani2019extension}  A. Zamani, and M.S. Moslehian, \textit{An extension of orthogonality relations based on norm derivatives}. The Quarterly Journal of Mathematics \textbf{70} (2019), 379--393.
\end{thebibliography}
\end{document}